\pgfplotsset{compat=1.16}
\newcommand{\norm}[1]{\lVert #1 \rVert}
\newtheorem{lemma}{Lemma}[section]
\newtheorem{theorem}[lemma]{Theorem}
\theoremstyle{remark}
\newtheorem{remark}[lemma]{Remark}
\theoremstyle{definition}
\newtheorem{definition}[lemma]{Definition}
\newcommand{\im}{i}
\newcommand{\vect}{\mathrm{vec}}
\author{
    Angelo A. Casulli\thanks{
        Scuola Normale Superiore, Pisa, Italy
        (\texttt{angelo.casulli@sns.it}).
    } 
    \and
    Leonardo Robol\thanks{
        Dipartimento di Matematica, Università di Pisa
        (\texttt{leonardo.robol@unipi.it}).
    } \thanks{The authors is member of the research group INdAM/GNCS.}
}
\title{Low-rank tensor structure preservation in 
  fractional operators by means of exponential sums}
\begin{document}
    \maketitle 

    \begin{abstract}
      The use of fractional differential equations is 
      a key tool in modeling non-local phenomena. Often, an efficient 
      scheme for solving a linear system involving the discretization of 
      a fractional operator is evaluating the matrix function $x = \mathcal A^{-\alpha} c$, 
      where $\mathcal A$ is a discretization of the classical Laplacian, and 
      $\alpha$ a fractional exponent between $0$ and $1$. In this work, we derive 
      an exponential sum approximation for $f(z) =z^{-\alpha}$ that is 
      accurate over $[1, \infty)$ and allows to efficiently approximate 
      the action of bounded and unbounded operators of this kind
      on tensors stored in a variety of low-rank formats (CP, TT, Tucker). The 
      results are relevant from a theoretical perspective as well, as they 
      predict the low-rank approximability of the solutions of these linear 
      systems in low-rank tensor formats. 
    \end{abstract}

    \section{Introduction}

    We are concerned with the problem of computing the 
    solution of a linear system $\mathcal A^{\alpha} x = c$
    for $0 < \alpha < 1$,  
    where $\mathcal A$ is a 
    \emph{Kronecker sum}:
    \begin{equation} \label{eq:kronsum}
        \mathcal A := \bigoplus_{i = 1}^d A_i = 
        \underbrace{A_1 \otimes I \otimes \ldots \otimes  I}_{d\ \text{terms}} + 
        I \otimes A_2 \otimes \ldots \otimes I + 
        \ldots + 
        I \otimes \ldots \otimes I \otimes A_d,  
    \end{equation}
    for $i = 1, \ldots, d$. 
    This problem arises naturally when solving fractional PDEs 
    on tensorized domains \cite{ilic2005numerical,yang2011novel}, 
    for instance approximating the steady-state behavior of the 
    initial value problem 
    \[
      \frac{\partial u}{\partial t} = -(-\Delta)^{\alpha} u + f, \qquad 
      \Delta u := \frac{\partial^2}{\partial x_1^2} + \ldots 
      + \frac{\partial^2}{\partial x_d^2}, \qquad 
      u(t, x_1, \ldots, x_d) : [0, 1]^d \to \mathbb R. 
    \]
    A common approach to approximate the differential operator 
    $-(-\Delta)^{\alpha}$ is to discretize the 
    usual Laplace operator    
    $\Delta$
    as a Kronecker sum of 
    one-dimensional discretizations of the second derivative , and 
    then raise the discrete operator to the power $\alpha$
    (adjusting the sign to make it positive definite). This 
    yields a discretization of the fractional Laplacian \cite{ilic2005numerical}, and 
    whenever the domain has a tensor structure (as in the case 
    above where $\Omega = [0, 1]^d$)
    the discrete operator is a 
    Kronecker sum as in \Cref{eq:kronsum}. Such structure is directly 
    obtained if the problem is discretized through 
    finite differences, and can be recovered with finite 
    elements as well up to inverting mass matrices. 
    
    When solving for the steady state the linear system with a matrix given as a Kronecker 
    sum is the operation with higher computational cost. A similar 
    bottleneck is encountered for the treatment of the time-dependent 
    problem by implicit methods, which are unavoidable due to the 
    stiffness of the Laplace operator. 

    The case $\alpha = 1$, which corresponds to the classical
    Laplace operator,
    has been analyzed in detail in the 
    literature (see \cite{kressner2010krylov} and the references therein).
     When $d = 2$ the problem can be recast 
    as solving a linear matrix equation \cite{palitta2016matrix}
    (called 
    \emph{Lyapunov equation} if 
    $A_1 = A_2$, and \emph{Sylvester equation} otherwise). These 
    equations are often studied by reshaping the vectors 
    $x$ and $c$ into matrices $X$ and $C$, which yields 
    \[
        X A_1^T + A_2 X = C, \qquad 
        c = \mathrm{vec}(C), \quad 
        x = \mathrm{vec}(X). 
    \]
    Here, the $\mathrm{vec}$ operator stacks all the column of 
    a matrix on top of the other. In several instances of this 
    problem, the right hand side matrix $C$ is low-rank, or 
    at least numerically low-rank (i.e., with decaying singular values).
    This is the case when the right hand side is the discretization 
    of a (piece-wise) smooth function \cite{townsend2014computing}. 
    Under this assumption, the low-rank property is numerically inherited 
    by the solution $X$, which can be efficiently approximated 
    using low-rank solvers for matrix equations such as 
    rational Krylov methods \cite{simoncini2016computational,palitta2016matrix} 
    or ADI \cite{benner2009adi}

    When $d > 2$, similar results can be obtained, but the 
    derivation is more challenging. In this context one can 
    naturally reshape the vectors $x$ and $c$ as $d$-dimensional 
    tensor, for which several (non-equivalent) 
    definitions of rank are available
    \cite{grasedyck2013literature}. Similar 
    low-rank approximability results have been given in 
    \cite{kressner2010krylov}, relying on exponential sum 
    approximation (which we describe in detail in 
    \Cref{sec:expsums}). 

    Krylov projection methods can be extended to the case 
    $0 < \alpha < 1$ when $d = 2$, using the formulation of 
    the problem as the evaluation of a bivariate matrix 
    function
    \cite{kressner2011bivariate,kressner2019krylov,massei2021rational,massei2022mixed}. 
    Although in principle this approach may be used for higher 
    $d$ as well, it leads to multivariate matrix functions 
    and Tucker tensor approximation, which has an 
    exponential storage and complexity cost in $d$, and hence does 
    not solve the so-called ``curse of dimensionality'' \cite{oseledets2009breaking}. 

    Extending 
    results for tensor Sylvester equations to the case $\alpha < 1$ is 
    inherently difficult since the separability of the operator 
    is lost, and all strategies based on displacement ranks \cite{shi2021compressibility} are not 
    easily applicable. 

    In this work, we consider the use of exponential sums 
    to derive low-rank approximability results and low-rank 
    solvers for the case of a generic $d$ and $0 < \alpha < 1$. 
    Our results can be interpreted as an extension of 
    exponential sum approximation for $1/z$, 
    see for instance \cite{hackbusch2015hierarchical} and the reference therein. 

    The work is structured as follows. In Section~\ref{sec:expsums}
    we derive an exponential sum approximation for $z^{-\alpha}$ over 
    $[1, +\infty]$, and provide guaranteed and explicit error bounds. 
    We prove that this can be used to approximate the solution 
    of the linear systems $\mathcal A^{\alpha} x = c$ in a cheap 
    way. 
    In Section~\ref{sec:appr-tensors} we show that this representation 
    of the solution can be used to derive approximation results for 
    the solution in tensors in the same low-rank structured used for 
    the right hand side (Tucker, Tensor-Train, \ldots). 
    We conclude with some numerical experiments in Section~\ref{sec:numexps}, 
    and draw some final remarks in \Cref{sec:conclusions}. 

    \section{Exponential sums}
    \label{sec:expsums}

    We consider the approximation problem of determining 
    $\alpha_j, \beta_j$ such that 
    \begin{equation} \label{eq:expsum}        
        \xi^{-\alpha} \approx \sum_{j = 1}^k \alpha_j e^{-\beta_j \xi}. 
        \qquad 
        \xi \in [1, \infty). 
    \end{equation}
    Finding an expression in the above form (which we call 
    \emph{exponential sum}) allows to approximate the 
    function $z^{-\alpha}$ of a (possibly unbounded) 
    operator $\mathcal A$ expressed as a Kronecker sum 
    at a low computational cost. Indeed, if two matrices
    $A$ and $B$ commute, we have 
    $e^{AB} = e^{BA} = e^A e^B$. Since all 
    addends in a Kronecker sum
    commute we can write 
    \[
      e^{\mathcal -\beta \mathcal A} = \bigotimes_{i = 1}^d e^{-\beta A_i}, \qquad 
      \mathcal A = \bigoplus_{i = 1}^d A_i. 
    \]
    As 
    we will see in Section~\ref{sec:appr-tensors}, this is key in deriving 
    low-rank approximability bounds. 
    We may rewrite $\xi^{-\alpha}$
    in integral form as follows:
    \begin{equation} \label{eq:lt}
	\xi^{-\alpha} = \frac{1}{\Gamma(\alpha)}\int_{0}^\infty 
	\frac{e^{-t\xi}}{t^{1 - \alpha}} \ dt, \qquad 
	\xi \in \mathbb{R_+}. 
    \end{equation}
    Employing any quadrature rule for 
    approximating \eqref{eq:lt} yields an approximant of 
    $\xi^{-\alpha}$ by taking a weighted average of evaluations 
    of the integrand, which is exactly in the form 
    of \Cref{eq:expsum}. Let 
    $w_j$ and $t_j$, for $j = 1, \ldots, k$, be the weights
    and nodes of such quadrature, respectively. Then,
    \[
    \xi^{-\alpha} \approx \sum_{j = 1}^k 
    w_j \frac{e^{-t_j\xi}}{t_j^{1 - \alpha}} = 
    \sum_{j = 1}^k 
    \alpha_j e^{-\beta_j \xi}, \qquad 
    \begin{cases}
        \alpha_j = w_j \frac{t_j^{\alpha-1}}{\Gamma(\alpha)} \\
        \beta_j  = t_j    
    \end{cases}. 
    \]

    Our aim is deriving a quadrature that is uniformly accurate 
    over $[1, +\infty)$. We will achieve this 
    goal by a technique called \emph{sinc quadrature}, 
    also known as infinite trapezoidal rule, 
    coupled with appropriate change of variables. 

    We briefly recap the classical results on sinc quadrature 
    in \Cref{sec:sinc}; then, we build the 
    approximation over $[1, \infty)$ in \Cref{sec:sqrtexp}
    and we show how this can be used to approximate the 
    solution of the linear system 
    $\mathcal A^{\alpha}x = c$ 
    and to provide theoretical 
    prediction of approximability in low-rank tensor formats 
    for $x$, under the assumption that 
    $c$ is itself of low tensor rank (up to appropriately reshaping it). 

    \subsection{Sinc quadrature}
    \label{sec:sinc}

    We refer the reader 
    to \cite{stenger2012numerical} for a more detailed description
    of these results, and in particular 
    \cite[Appendix D]{hackbusch2015hierarchical} for a similar 
    derivation applied to $g(z) := z^{-1}$. 
    
    Let $d > 0$ and $g(z)$ be analytic 
    over the infinite strip 
    $
    \mathcal{D}_d := \{ z \ | \ 
    -d < \Im(z) < d
    \}
    $, 
    and such that the integral on the boundary of $\mathcal D_d$ is 
    finite, i.e., 
    \begin{equation} \label{eq:normDd}
        \norm{g}_{\mathcal D_d} := 
        \int_{\partial \mathcal D_d} |g(z)| \cdot |dz| < \infty.
    \end{equation}
    A sinc quadrature formula is obtained by approximating the
    integral of $g(z)$  
    over the real axis by an infinite trapezoidal rule with step $h$:
    \[
    \int_{-\infty}^{\infty} g(t)\ dt \approx 
    h \sum_{j \in \mathbb Z} g(jh). 
    \]
    For $h \to 0$, this quadrature converges exponentially. The constant
    in front of the convergence bound depends on the integral 
    in \eqref{eq:normDd}. More precisely, we have 
    
    \begin{theorem}[\protect{\cite[Theorem D.26]{borm2003introduction}}] \label{eq:expsumconv}
      \label{thm:sinc}
        Let $g(z)$ be holomorphic over $\mathcal D_d$. Then, 
        \[
        \left| 
        \int_{-\infty}^{\infty} g(t)\ dt -
        h \sum_{j \in \mathbb Z} g(jh)
        \right| \leq 
        \norm{g}_{\mathcal D_d} \cdot e^{-2\pi d / h}.
        \]
    \end{theorem}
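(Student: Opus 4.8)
The plan is to bound the truncation error of the infinite trapezoidal rule by exploiting the analyticity of $g$ on the strip $\mathcal{D}_d$ via a contour-integral (Poisson-summation / residue) argument. First I would introduce the $2\pi i$-periodic function that reproduces the trapezoidal error: the key classical identity is that the difference between $\int_{-\infty}^{\infty} g(t)\,dt$ and the sampled sum $h\sum_{j\in\mathbb{Z}} g(jh)$ can be written as a contour integral of $g$ against a kernel that decays like $e^{-2\pi d/h}$ away from the real axis. Concretely, one uses the fact that
\[
h \sum_{j \in \mathbb{Z}} g(jh) = \frac{1}{2\im}\int_{\mathcal{C}} g(z)\cot\!\Big(\frac{\pi z}{h}\Big)\,dz
\]
for a contour $\mathcal{C}$ encircling the real axis, and that $\int_{-\infty}^\infty g(t)\,dt$ arises from the same integrand with $\cot$ replaced by $\mathrm{sign}(\Im z)\cdot(\pm 1)$; subtracting, the error becomes an integral over the two horizontal lines $\Im(z) = \pm d'$ for any $d' < d$, with integrand $g(z)$ times $\big(\cot(\pi z/h) \mp \im\big)$.

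The second step is to estimate that kernel. On the line $\Im(z) = d'$ one has $|\cot(\pi z/h) - \im| = \big|\frac{2}{e^{2\pi \im z/h}-1}\big| \le \frac{2 e^{-2\pi d'/h}}{1 - e^{-2\pi d'/h}}$, and symmetrically on $\Im(z) = -d'$; as $d' \uparrow d$ this gives a bound proportional to $e^{-2\pi d/h}$ (absorbing the mild $1/(1-e^{-2\pi d'/h})$ factor, or keeping it explicit and noting it is bounded for the $h$ of interest, or — cleanest — letting $d'\to d$ and using the finiteness of $\norm{g}_{\mathcal{D}_d}$ to justify passing the limit through). Multiplying the kernel bound by $\int |g(z)|\,|dz|$ over $\partial \mathcal{D}_d$ and using \eqref{eq:normDd} yields exactly $\norm{g}_{\mathcal{D}_d}\, e^{-2\pi d / h}$.

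The third step is the analytic bookkeeping that makes the contour manipulation legitimate: one must justify pushing the horizontal integration lines out to $\Im(z) = \pm d$, which requires that $g$ together with the bounded kernel be integrable there — this is precisely what the hypothesis \eqref{eq:normDd} provides — and that the vertical "closing" segments at $\Re(z) \to \pm\infty$ contribute nothing in the limit, which follows since $\cot(\pi z/h)\mp\im$ is uniformly bounded on the strip away from the real axis and $g$ is integrable on the boundary. Since the statement is quoted from \cite[Theorem D.26]{borm2003introduction}, I would present this as a brief recollection of that argument rather than a from-scratch development.

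The main obstacle I anticipate is purely the careful treatment of the limit $d' \to d$: the kernel bound $\frac{2e^{-2\pi d'/h}}{1-e^{-2\pi d'/h}}$ has the clean form $e^{-2\pi d'/h}$ only asymptotically, so to land on the stated constant $\norm{g}_{\mathcal{D}_d}$ with no extra factor one must either invoke a dominated-convergence argument as $d'\uparrow d$ or observe that $\frac{2}{1-e^{-2\pi d'/h}} \le $ (something that combines with the remaining slack in $e^{-2\pi(d-d')/h}$). Everything else — the periodization identity, the residue computation giving back the sampled sum, and the triangle-inequality estimate — is routine.
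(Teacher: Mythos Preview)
The paper does not prove this theorem at all: it is stated with a citation to \cite[Theorem~D.26]{borm2003introduction} and used as a black box. Your proposal correctly anticipates this (``I would present this as a brief recollection of that argument''), and the contour-shifting sketch you give is indeed the standard proof found in that reference and in Stenger's book. So there is nothing to compare against in the paper itself; your outline is the classical argument and is sound.

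One minor remark on your identified obstacle: the kernel bound on $\Im z = d'$ is $|\cot(\pi z/h)+\im| \le \dfrac{2}{e^{2\pi d'/h}-1}$, which for the relevant regime is at most $\dfrac{2e^{-2\pi d'/h}}{1-e^{-2\pi d'/h}}$ as you write, but note the raw bound already carries a factor $2$ that does not obviously disappear; different sources absorb this into either the constant or the definition of $\norm{g}_{\mathcal D_d}$ (here the norm integrates over \emph{both} boundary lines, which supplies the needed factor). If you intend to reproduce the exact constant $\norm{g}_{\mathcal D_d}$ with no prefactor, track that $2$ carefully against the $\tfrac12$ coming from splitting the cotangent contour and the fact that $\partial\mathcal D_d$ has two components.
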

    
    The above result is not of immediate practical use, since the 
    discretization of the integral requires to
    evaluate an infinite series. However,
    if $g(t)$ decays quickly enough for $|t| \to \infty$, we 
    can truncate 
    the sum and estimate the error by bounding the magnitude 
    of the dropped terms. 

    To obtain an efficient evaluation scheme, we will need to balance the
    error performed when truncating the series with the one 
    coming from the quadrature rule. Hence, the choice of the 
    number of terms to consider will automatically imply an 
    optimal step size $h$ in most cases. This will be discussed 
    in further detail in the next sections. 

    \subsection{Approximating $z^{-\alpha}$ over $[1, \infty)$}
    \label{sec:sqrtexp}

    The integral form of $\xi^{-\alpha}$ that we 
    considered in Equation~\ref{eq:lt} is defined by 
    an integral over $[0, \infty)$. This is not suitable 
    for employing sinc quadrature techniques, and therefore we need 
    to remap it as an integral over $\mathbb R$. To this aim, we 
    introduce the change of variable 
    $t = \log(1 + e^{\tau})^{\frac 1 \alpha}$;
    by a direct computation we obtain: 
    \begin{equation} \label{eq:cv1}
        \xi^{-\alpha} = \frac{1}{\Gamma(\alpha)}\int_{0}^\infty 
        \frac{e^{-t\xi}}{t^{1 - \alpha}} \ dt = 
        \frac{1}{\alpha \Gamma(\alpha)}\int_{-\infty}^\infty 
        \frac
        {e^{-\log(1 + e^{\tau})^{\frac 1 \alpha}\xi}}
        {1 + e^{-\tau}}
        \ d\tau.
    \end{equation}
    For the sake of notational simplicity, we now define 
    the following shorthand for the integrand:
    \[
        g(\tau) := \frac
{e^{-\log(1 + e^{\tau})^{\frac 1 \alpha}\xi}}
{1 + e^{-\tau}}
    \]
    We note that $g(\tau)$ implicitly depends on $\xi$, but we do not report 
    this dependency explicitly to keep the notation more readable. 
    Recall that $\mathcal D_d := \{ z \ | \ |\Im(z)| \leq d \} \subseteq \mathbb C$
    denotes the infinite horizontal strip of width $2d$, 
    centered around the real line.

    To use the results on sinc approximation, we first 
    need to ensure that the integrand is analytic on 
    the infinite strip $\mathcal D_d$, for suitable choices 
    of $d$. 

    \begin{lemma} \label{lem:analiticity}
      The function $g(\tau)$ is analytic on $\mathcal D_d$ 
      for any $d < \pi$. 
    \end{lemma}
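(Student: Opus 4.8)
The plan is to locate exactly where the integrand $g(\tau)$ can fail to be analytic, and to show that none of these points lie inside $\mathcal D_d$ when $d<\pi$. Since $g(\tau)$ is built from compositions of entire functions (the exponential, and $\xi$ times a power), the only two sources of non-analyticity are: (i) the denominator $1+e^{-\tau}$ vanishing, and (ii) the branch point and branch cut of the map $w \mapsto w^{1/\alpha}$ (equivalently $\log(1+e^\tau)^{1/\alpha} = \exp\!\big(\tfrac{1}{\alpha}\log\log(1+e^\tau)\big)$), which forces us to track both where $1+e^\tau = 0$ and where $\log(1+e^\tau)$ hits the branch cut of the outer logarithm.

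First I would handle the denominator: $1+e^{-\tau}=0$ exactly when $e^{-\tau}=-1$, i.e. $\tau \in \{ (2m+1)\pi \im : m \in \mathbb Z\}$; the one closest to the real axis has imaginary part $\pm\pi$, so all such poles are excluded from $\mathcal D_d$ for $d<\pi$. Next I would handle the inner zero $1+e^\tau=0$, which occurs at the same set $\tau = (2m+1)\pi\im$ — again safely outside $\mathcal D_d$ for $d<\pi$. The remaining, and main, obstacle is the branch cut of the power function: writing $\log(1+e^\tau)^{1/\alpha}$ via the principal logarithm, I need $\log(1+e^\tau)$ to avoid the negative real axis (including $0$), i.e. $1+e^\tau \notin (-\infty,0]$. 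One checks $1+e^\tau \in (-\infty,0]$ iff $e^\tau$ is real and $e^\tau \le -1$, i.e. $\tau = s + (2m+1)\pi\im$ with $s \ge 0$ real and $m\in\mathbb Z$; all of these again have $|\Im(\tau)| \ge \pi$. Hence on the open strip $|\Im(\tau)| < \pi$ all three obstructions are avoided, and $g$ is a composition of functions analytic there, so $g$ is analytic on $\mathcal D_d$ for every $d<\pi$.

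The step I expect to require the most care is the branch-cut analysis in the previous paragraph: I must argue cleanly that $\Re(1+e^\tau) \le 0$ together with $\Im(1+e^\tau)=0$ forces $\Im(\tau)$ to be an odd multiple of $\pi$, rather than merely bounding things crudely. Concretely, from $\Im(e^\tau)=0$ with $\tau = s+\im t$ we get $e^s\sin t = 0$, so $t = m\pi$; then $1+e^\tau = 1 + (-1)^m e^s \le 0$ forces $m$ odd (and $s \ge 0$). I would present this as a short explicit computation. A final remark worth including: one does in fact lose analyticity at $|\Im(\tau)| = \pi$, so the bound $d<\pi$ is sharp; this is not needed for the lemma but clarifies why the strict inequality appears. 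With analyticity established, \Cref{thm:sinc} becomes applicable once we also verify the finiteness of $\norm{g}_{\mathcal D_d}$, which is deferred to the subsequent estimates.
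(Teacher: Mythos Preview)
Your overall strategy matches the paper's: locate every potential singularity or branch cut of $g$ and show they all sit on the lines $\Im\tau=(2m+1)\pi$. There is, however, a genuine slip in the branch-cut step. In the first paragraph you correctly identify that you must track where $\log(1+e^\tau)$ lands on the branch cut of the \emph{outer} logarithm defining the power, i.e.\ where $\log(1+e^\tau)\in(-\infty,0]$. But in the second paragraph you write ``I need $\log(1+e^\tau)$ to avoid the negative real axis (including $0$), i.e.\ $1+e^\tau\notin(-\infty,0]$,'' and this ``i.e.'' is false: $\log(1+e^\tau)\in(-\infty,0]$ is equivalent to $1+e^\tau\in(0,1]$, not to $1+e^\tau\in(-\infty,0]$. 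The computation you then carry out (showing $1+e^\tau\le 0$ forces $t=(2m+1)\pi$, $s\ge 0$) is the branch-cut check for the \emph{inner} logarithm, which is also needed, but it is not the outer-power condition you announced.

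The missing case is precisely what the paper treats as its ``third situation'': $\log(1+e^\tau)\in(-\infty,0]$, equivalently $e^\tau\in(-1,0)$. The fix is immediate and uses the very computation you already wrote down: from $e^s\sin t=0$ one gets $t=m\pi$, and then $(-1)^m e^s\in(-1,0)$ forces $m$ odd and $s<0$, so again $|\Im\tau|\ge\pi$. Once you insert this case, your argument is complete and essentially identical to the paper's.
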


    \begin{proof}
      To ensure the analiticity of 
      the integrand $g(\tau)$ we choose to exclude points where $e^{-\tau} = -1$, 
      which would force the denominator to vanish, to exclude points 
      $1 +  e^{\tau} \in \mathbb R_-$, which would force the 
      logarithm to be evaluated at its branch cut, and finally 
      to exclude all points in $\mathbb R^{-}$ from the 
      argument of the fractional power, to avoid the analogous problem for
      the logarithm implicitly defining it. If these three 
      conditions are met, then the 
      function is obtained through compositions of functions that 
      are analytic on the entire domain of interest. 

      We shall deal with these cases separately. 
      The first condition is linked with a 
      class of poles encountered for 
      $\tau = i(2k + 1) \pi$, for any $k \in \mathbb Z$, and 
      we can exclude them by requiring $d < \pi$. Similarly, 
      this condition automatically implies that 
      $1 + e^{\tau} \not \in \mathbb R_-$, which excludes evaluations 
      of $\log(1 + e^{\tau})$ on its branch cut. 

      The third situation 
       is encountered when 
      $\log(1 + e^\tau) \in (-\infty, 0]$, which in turn 
      implies $e^{\tau} \in [-1, 0)$. If we write 
      $\tau = \alpha + i\beta$, this only happens when 
      \[
        \alpha \leq 0, \qquad \beta = (2k+1) \pi, \  
        k \in \mathbb Z. 
      \]
      Similarly to the previous case, 
      we can avoid this situation by imposing 
      a constraint on $d$, and requiring
      $|\beta| \leq d < \pi$. 
    \end{proof}

    We will now derive a bound for the
    integral of the modulus 
    of $g(\tau)$ in \eqref{eq:cv1} 
    over $\partial \mathcal D_d$. This will impose 
    further constraints on the choice of $d$, which will 
    be stronger than the ones imposed 
    by \Cref{lem:analiticity}. We make the following claims, that will be detailed (with
    explicit constants) in this section. Let 
    $z = \gamma \pm \im d$ a point in $\partial \mathcal D_d$. Then, 
\begin{itemize}
	\item For $\gamma \to -\infty$, the integrand behaves as 
	$\mathcal O(e^{-|\gamma|})$. 
	\item For $\gamma \to +\infty$, the integrand behaves as 
	$\mathcal O(e^{- \xi |\gamma|^{\frac 1 \alpha} 
		\cos{\frac{d}{\alpha |\gamma|}}})$. 
\end{itemize}

We shall summarize these results in Lemma~\ref{bound-expdecay-le0}
and \ref{bound-expdecay-ge0}, that will 
be later leveraged to prove the convergence of the 
exponential sum approximation. We shall see that 
combining the hypotheses of these results, we will 
need to ensure that 
$d$ is chosen as $d \le \alpha \pi / 8$. 

\begin{lemma}\label{bound-expdecay-le0}
  Let $\tau = \gamma \pm id$, with 
	$\gamma \leq 0$, $0 < \alpha < 1$, and $\xi > 0$ be real numbers, and let $0\le d \le \frac \pi 2$ be such that 
	\begin{equation}
  \label{eqn:bound-d-gammle0}
		\sin d\le \frac{1}{4}\tan\left(\frac{\alpha\pi}{2}\right).
	\end{equation} Then, 
	\[
    |g(\tau)| \leq \left| \frac{1}{1 + e^{-\tau}}\right| \leq e^{-|\gamma|}. 
  \]
\end{lemma}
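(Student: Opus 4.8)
The plan is to bound the two factors of $g(\tau)$ separately for $\tau = \gamma \pm id$ with $\gamma \le 0$. Since $|e^{-\log(1+e^\tau)^{1/\alpha}\xi}| = e^{-\xi \Re(\log(1+e^\tau)^{1/\alpha})}$, the claimed inequality $|g(\tau)| \le |1/(1+e^{-\tau})|$ will follow immediately once I show that $\Re(\log(1+e^\tau)^{1/\alpha}) \ge 0$, i.e. that $\log(1+e^\tau)^{1/\alpha}$ lies in the closed right half-plane. Writing $w := \log(1+e^\tau)$, the quantity $w^{1/\alpha} = \exp(\tfrac1\alpha \log w)$ has nonnegative real part exactly when $|\arg w| \le \alpha\pi/2$ (using the principal branch and $0<\alpha<1$). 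So the first half of the argument reduces to controlling the argument of $w = \log(1+e^\tau)$.

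First I would estimate $\arg(\log(1+e^\tau))$ in terms of $d$. For $\gamma \le 0$ we have $|e^\tau| = e^\gamma \le 1$, so $1+e^\tau$ lies in the disk of radius $1$ centered at $1$; in particular $\Re(1+e^\tau) \ge 0$ and $|\arg(1+e^\tau)| \le \pi/2$, actually $|\arg(1+e^\tau)| \le \arcsin(e^\gamma \sin d) \le \arcsin(\sin d) = d$ once one accounts for the sign $\pm d$ of the imaginary part of $e^\tau$ (here $\Im(e^\tau) = e^\gamma \sin(\pm d)$). Then $\log$ sends $1+e^\tau$ (which has modulus between, roughly, $1-e^\gamma$ and $1+e^\gamma$ and small argument) to a point $w$ whose argument $\arg w = \arctan\bigl(\arg(1+e^\tau) / \log|1+e^\tau|\bigr)$ — this is where the geometry gets delicate, since $\log|1+e^\tau|$ can be small (near $\gamma = 0$ it is $O(1)$, but one must check it does not vanish where $\arg(1+e^\tau)\ne 0$). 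The cleanest route is: $w$ ranges over the image under $\log$ of the disk $|z-1|\le e^\gamma \le 1$, and one shows this image lies in the sector $|\arg w| \le \arctan\bigl(\tfrac{e^\gamma \sin d}{\text{(something)}}\bigr)$; bounding the tangent of the argument of $w$ by $\sin d$ times a controllable factor, and then invoking the hypothesis $\sin d \le \tfrac14 \tan(\alpha\pi/2)$, yields $|\arg w| \le \alpha\pi/2$, hence $\Re(w^{1/\alpha}) \ge 0$ and the exponential factor has modulus $\le 1$.

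For the second factor, I would show $|1+e^{-\tau}|^{-1} \le e^{-|\gamma|} = e^{\gamma}$ for $\gamma \le 0$, i.e. $|1+e^{-\tau}| \ge e^{-\gamma} = e^{|\gamma|}$. Since $e^{-\tau} = e^{-\gamma}e^{\mp id}$ has modulus $e^{-\gamma} \ge 1$, the reverse triangle inequality gives $|1+e^{-\tau}| \ge |e^{-\tau}| - 1 = e^{-\gamma} - 1$, which is not quite enough; instead I would use $|1 + e^{-\tau}|^2 = 1 + 2e^{-\gamma}\cos d + e^{-2\gamma} \ge e^{-2\gamma}$ precisely when $1 + 2e^{-\gamma}\cos d \ge 0$, which holds since $\cos d \ge 0$ for $d \le \pi/2$. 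Taking square roots gives $|1+e^{-\tau}| \ge e^{-\gamma} = e^{|\gamma|}$, as desired, and chaining the two bounds completes the proof.

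The main obstacle is the middle step: getting a clean, rigorous bound on $|\arg(\log(1+e^\tau))|$ uniformly in $\gamma \le 0$, because $\log(1+e^\tau)$ passes near $0$ as $\gamma \to -\infty$ (where $\log(1+e^\tau)\approx e^\tau$, so $\arg \approx \pm d$, which is fine) but the intermediate regime requires care to confirm the worst case of $|\arg w|$ is attained in a way compatible with the constant $\tfrac14\tan(\alpha\pi/2)$ — in particular tracking where the factor of $4$ comes from. I expect the argument there to go through a monotonicity or extremal-point analysis of $\arg(\log(1+re^{\pm id}))$ over $r \in (0,1]$.
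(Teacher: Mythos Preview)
Your overall strategy coincides with the paper's: split $g(\tau)$ into the exponential factor and $(1+e^{-\tau})^{-1}$, show the first has modulus at most $1$ by proving $|\arg w|\le \alpha\pi/2$ for $w=\log(1+e^\tau)$, and bound the second directly. Your treatment of the second factor is exactly the paper's computation, so that part is complete.

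The gap you flag is real, but the resolution is more elementary than the ``monotonicity or extremal-point analysis'' you anticipate. The paper does not study $\arg(\log(1+re^{\pm id}))$ as a function of $r$; instead it writes
\[
\tan\theta(\gamma)=\frac{2\arctan\!\bigl(\tfrac{e^\gamma\sin d}{1+e^\gamma\cos d}\bigr)}{\log(1+e^{2\gamma}+2e^\gamma\cos d)}
\]
and bounds numerator and denominator separately by the crude inequalities $\arctan x\le x$ (giving numerator $\le 2e^\gamma\sin d$) and $\log(1+x)\ge x-\tfrac12 x^2$ applied after the trivial estimate $\log(1+e^{2\gamma}+2e^\gamma\cos d)\ge \log(1+e^\gamma)$ (giving denominator $\ge e^\gamma-\tfrac12 e^{2\gamma}$). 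The ratio is then at most $\dfrac{2\sin d}{1-\tfrac12 e^\gamma}$, and since $\gamma\le 0$ forces $e^\gamma\le 1$, the denominator is at least $\tfrac12$. That is precisely where the factor $4$ comes from: $\tan\theta(\gamma)\le 4\sin d\le \tan(\alpha\pi/2)$ by hypothesis, hence $|\theta(\gamma)|\le \alpha\pi/2$ and $\Re(w^{1/\alpha})\ge 0$. No extremal analysis is needed; two one-line scalar inequalities close the argument.
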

\begin{proof}
  To prove the result we show that 
  \begin{equation} \label{eq:num-le1}
    \left| 
      e^{-\xi \log(1 + e^{\gamma \pm \im d})^{\frac 1 \alpha}}
    \right| \leq 1. 
  \end{equation}
  If the above condition is satisfied, 
  using $d\le \frac{\pi}{2}$ we have 
  \begin{equation*}
    |g(\tau)| \leq 
    \frac{1}{|1 + e^{-\gamma \pm \im d}|}=
    \frac{1}{\sqrt{1+e^{-2\gamma}+2e^{-\gamma} \cos(d)}}\le 
    e^{-|\gamma|}.
  \end{equation*}
  We now prove the claim in Equation~\eqref{eq:num-le1}.
	Using polar coordinates we can write
	\[
	\log(1 + e^{\gamma \pm id}) =
	\sqrt{
		\frac 14 \log(
		1 + e^{2\gamma} + 2e^{\gamma} \cos(d)
		)^2 + \arctan \left(
		\frac
		{e^\gamma \sin d}
		{1 + e^\gamma \cos d}
		\right)^2
	} \cdot e^{i \theta(\gamma)}, 
	\]
	where
	\[
	\theta(\gamma) := \arctan\left(
	\frac
	{\pm 2\arctan \left(
		\frac
		{e^\gamma \sin d}
		{1 + e^\gamma \cos d}
		\right)}
	{\log(1 + e^{2 \gamma} + 2e^\gamma \cos(d))}
	\right).
	\]
	We can write the $\frac{1}{\alpha}$-th power of the logarithm as
	\[
	\log(1 + e^{\gamma \pm id})^{\frac{1}{\alpha}} =
	\left(
	\frac {1}{4} \log(
	1 + e^{2\gamma} + 2e^{\gamma}\cos(d) 
	)^2 + \arctan \left(
	\frac
	{e^\gamma \sin d}
	{1 + e^\gamma \cos d}
	\right)^2
	\right)^{\frac{1}{2\alpha}} \cdot e^{i\frac{ \theta(\gamma)}{\alpha}}.
	\]
	Since $\xi> 0$, it is sufficient to prove that the real 
    part of the above expression is positive. This is 
    equivalent to imposing that 
	$ 
        \cos\left(\frac{\theta(\gamma))}{\alpha}\right)\ge 0.
	$
	In particular we can show that
	\begin{equation}\label{eqn:equiv-thesis-lemma}
		\left|\frac{\theta(\gamma)}{\alpha}\right|=
		\frac{1}{\alpha}\arctan\left(
		\frac
		{2 \arctan \left(
			\frac
			{e^\gamma \sin d}
			{1 + e^\gamma \cos d}
			\right)}
		{\log(1 + e^{2 \gamma} + 2e^\gamma\cos(d))}
		\right)
	 \le \frac{\pi}{2}.
	\end{equation}
  The second inequality is equivalent to imposing 
  \[
    \frac
		{2 \arctan \left(
			\frac
			{e^\gamma \sin d}
			{1 + e^\gamma \cos d}
			\right)}
		{\log(1 + e^{2 \gamma} + 2e^\gamma\cos(d))} \leq \tan 
    \left(
      \frac{\pi \alpha}{2}
    \right). 
  \]
	Recalling that $\arctan(x)\le x$ for all $x\ge 0$, we have
	\[
	\arctan \left(
	\frac
	{e^\gamma \sin d}
	{1 + e^\gamma \cos d}
	\right)\le 	\frac{e^\gamma \sin d}{1 + e^\gamma \cos d}\le e^\gamma\sin d
	\]
	and using the 
  inequality
   $\log(1+x)\ge x-\frac{1}{2}x^2$ for $x\ge 0$, we have
	\[
	\log(1 + e^{2 \gamma} + 2e^\gamma\cos(d))\ge \log(1 + e^\gamma)\ge e^\gamma-\frac{1}{2}e^{2\gamma}.
	\]
	Hence,
	\[\frac{2 \arctan \left(
		\frac
		{e^\gamma \sin d}
		{1 + e^\gamma \cos d}
		\right)}
	{\log(1 + e^{2 \gamma} + 2e^\gamma\cos(d))}\le  \frac{2\sin d}{1-\frac{1}{2}e^\gamma}\le 4\sin d.
	\]
  We conclude by 
	using the hypothesis \eqref{eqn:bound-d-gammle0}, 
  which implies that the right hand side is bounded by 
  $\tan 
  \left(
    \frac{\pi \alpha}{2}
  \right)$, as needed. 
\end{proof}

The next result controls the size of the integrand 
when the real part of the integration variable 
$\tau = \gamma \pm \im d$ is positive, which will enable to 
bound the norm of the integral in the right half plane.  

\begin{lemma}\label{bound-expdecay-ge0}
	Let $\tau = \gamma \pm id$, with 
  $\gamma, \xi > 0$ and $0 < \alpha < 1$ real positive numbers, and 
	$0 \leq d < \frac{\alpha \pi}{4}$; then, t
  the function $g(\tau)$ is bounded above in modulus by
	\[
	|g(\tau)| \leq \Big| e^{-\xi 
		\log(1 + e^{\gamma \pm \im d})^{\frac 1 \alpha}}
	\Big| 
	\leq 
	e^{
		-\xi |\gamma|^{\frac 1 \alpha} 
		\cos\left(\frac{d}{\alpha \max\{\gamma,\frac 12\}}\right)
  }. 
	\]
\end{lemma}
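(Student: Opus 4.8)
The plan is the following. The first inequality is immediate: since $d < \tfrac{\pi}{4}$ we have $\cos d > 0$, so $|1+e^{-\tau}|^2 = 1 + 2e^{-\gamma}\cos d + e^{-2\gamma} \ge 1$, whence $|1/(1+e^{-\tau})| \le 1$ and therefore $|g(\tau)| \le |e^{-\xi\log(1+e^{\gamma\pm\im d})^{1/\alpha}}|$. For the second inequality I would set $L := \log(1+e^{\gamma\pm\im d})$ and pass to polar coordinates, $L = \rho\, e^{\pm\im\phi}$ with $\rho = |L| > 0$ and $\phi = |\arg L| \ge 0$ (principal branch, as in the computation inside the proof of \Cref{bound-expdecay-le0}). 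Then $L^{1/\alpha} = \rho^{1/\alpha}\, e^{\pm\im\phi/\alpha}$ and $|e^{-\xi L^{1/\alpha}}| = e^{-\xi\rho^{1/\alpha}\cos(\phi/\alpha)}$, so the statement reduces to two estimates: $\rho \ge \gamma$, and $0 \le \cos\big(\tfrac{d}{\alpha\max\{\gamma,1/2\}}\big) \le \cos(\phi/\alpha)$. Multiplying these (all quantities nonnegative) and using $\xi > 0$ together with $|\gamma| = \gamma$ then gives the claimed bound.

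Both estimates reduce to controlling $\Re L$ and $\Im L$. For the real part, $\Re L = \tfrac12\log(1 + 2e^\gamma\cos d + e^{2\gamma})$; from $1+2e^\gamma\cos d \ge 0$ we get $\Re L \ge \tfrac12\log(e^{2\gamma}) = \gamma$, and since $\gamma > 0$ and $d < \tfrac{\pi}{4}$ we have $1+2e^\gamma\cos d + e^{2\gamma} > 2+\sqrt2 > e$, hence also $\Re L > \tfrac12$; together $\Re L \ge \max\{\gamma, 1/2\}$, which in particular yields $\rho \ge |\Re L| = \Re L \ge \gamma$, the first estimate. For the imaginary part, $|\Im L| = \arctan\big(\tfrac{e^\gamma\sin d}{1+e^\gamma\cos d}\big) \le d$, where I use $\tfrac{e^\gamma\sin d}{1+e^\gamma\cos d} \le \tan d$ (equivalent to $e^\gamma\cos d \le 1+e^\gamma\cos d$) together with $d < \tfrac{\pi}{2}$.

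Combining, $\phi = \arctan\big(\tfrac{|\Im L|}{\Re L}\big) \le \tfrac{|\Im L|}{\Re L} \le \tfrac{d}{\max\{\gamma,1/2\}}$, using $\arctan t \le t$ for $t \ge 0$ and the two real-part bounds. Dividing by $\alpha$ and invoking $d < \tfrac{\alpha\pi}{4}$ gives $\tfrac{\phi}{\alpha} \le \tfrac{d}{\alpha\max\{\gamma,1/2\}} < \tfrac{\alpha\pi/4}{\alpha/2} = \tfrac{\pi}{2}$, so both angles lie in $[0,\tfrac{\pi}{2})$, on which $\cos$ is nonnegative and decreasing; hence $\cos(\phi/\alpha) \ge \cos\big(\tfrac{d}{\alpha\max\{\gamma,1/2\}}\big) \ge 0$, the second estimate. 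I expect the genuinely delicate point to be exactly this last step: one must land strictly inside $[0,\tfrac{\pi}{2})$ so that cosine is both monotone and nonnegative, and it is precisely this that forces the cutoff $\max\{\gamma,1/2\}$ — a bare bound $\phi \le d/\gamma$ would blow up as $\gamma \to 0^+$ — as well as the constraint $d < \tfrac{\alpha\pi}{4}$. Everything else ($\arctan t \le t$, monotonicity of $t \mapsto t^{1/\alpha}$, and $2+\sqrt2 > e$) is elementary.
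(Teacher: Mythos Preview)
Your proof is correct and follows essentially the same route as the paper: both arguments pass to polar coordinates $L=\rho e^{\pm i\phi}$, reduce the second inequality to the pair of estimates $\rho\ge\gamma$ and $\phi\le d/\max\{\gamma,\tfrac12\}$, and establish these via $\Re L\ge\max\{\gamma,\tfrac12\}$ together with $|\Im L|\le d$, using the constraint $d<\alpha\pi/4$ precisely to keep $\phi/\alpha$ in $[0,\pi/2)$. Your treatment is in fact slightly more explicit than the paper's in two places: you justify $\Re L>\tfrac12$ by the elementary inequality $2+\sqrt{2}>e$ (the paper simply asserts the analogous bound in its display~\eqref{eq:ineq-1}), and you spell out the use of $\arctan t\le t$ when bounding $\phi$.
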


  \begin{proof}
  We may write 
  \[
    |g(\tau)| = \frac{
      \Big| e^{-\xi 
        \log(1 + e^{\gamma \pm \im d})^{\frac 1 \alpha}
      }
      \Big|
    }{
      |1 + e^{-\tau}|
    } \leq \frac{
      \Big| e^{-\xi 
        \log(1 + e^{\gamma \pm \im d})^{\frac 1 \alpha}
      }
      \Big|
    }{
      |1 + e^{-\gamma} \cos(d)|
    } \leq 
    \Big| e^{-\xi 
        \log(1 + e^{\gamma \pm \im d})^{\frac 1 \alpha}
      }
      \Big|, 
  \]
  thanks to $\cos(d) \geq 0$. 

  We now prove the second inequality; 
  thanks to $\xi \in \mathbb R$, 
	\[
	\Big| e^{-\xi 
		\log(1 + e^{\gamma \pm \im d})^{\frac 1 \alpha}
	}
	\Big| = 
	\exp( -\xi \Re(
	\log(1 + e^{\gamma \pm \im d})^{\frac 1 \alpha}
	)
	). 
	\]
	Hence, in order to devise an upper bound for the left
	hand side, we need a lower bound for the
	real part of the  logarithm in the right hand side. By
	writing the argument of the logarithm in polar coordinates
	we obtain the following expression:
	\[
	\log(1 + e^{\gamma \pm id})
	= \frac 12 \log(
	1 + e^{2\gamma} + 2e^{\gamma} \cos(d)
	)
	\pm \im \arctan \left(
	\frac
	{e^\gamma \sin d}
	{1 + e^\gamma \cos d}
	\right). 
	\]
	We now rewrite the above in polar coordinates, which yields 
	\[
	\log(1 + e^{\gamma \pm id}) =
	\sqrt{
		\frac 14 \log(
		1 + e^{2\gamma} + 2e^{\gamma} \cos(d)
		)^2 + \arctan \left(
		\frac
		{e^\gamma \sin d}
		{1 + e^\gamma \cos d}
		\right)^2
	} \cdot e^{i \theta(\gamma)}, 
	\]
	where
	\[
	\theta(\gamma) := \arctan\left(
	\frac
	{\pm 2\arctan \left(
		\frac
		{e^\gamma \sin d}
		{1 + e^\gamma \cos d}
		\right)}
	{\log(1 + e^{2 \gamma} + 2e^\gamma\cos(d))}
	\right). 
	\]
	This gives an explicit expression for 
	the real part of the above logarithm raised 
	to the power $\frac 1\alpha$, which is:
	\[
	\Re(\log(1 + e^{\gamma \pm id})^{\frac 1 \alpha}) =
	\left[
	\frac 14 \log(
	1 + e^{2\gamma} + 2e^{\gamma} \cos(d)
	)^2 + \arctan \left(
	\frac
	{e^\gamma \sin d}
	{1 + e^\gamma \cos d}
	\right)^2
	\right]^{\frac{1}{2\alpha}} 
	\cos\left(\frac{\theta(\gamma)}{\alpha}\right).
	\]
	The above yields an exact expression for the quantity
	that we need to bound. We now make some simplifications, 
	employing the following inequalities:
	\begin{align} \label{eq:ineq-1}
		\log(1 + e^{2\gamma} + 2e^\gamma\cos(d)) &\geq \max \{
		2\gamma, 
		1
		\}
		&
		0 &\leq \arctan \left(
		\frac
		{e^\gamma \sin d}
		{1 + e^\gamma \cos d}
		\right) \leq d.
	\end{align}
	The two inequalities can be combined to show that 
	\[
	0 \leq  \theta(\gamma) \leq \frac{d}{\max\{ 
		\gamma, \frac 12 \}
	} \implies 
	\cos\left(
	\frac{\theta(\gamma)}{\alpha}
	\right) \geq \cos \left(
	\frac{d }{\alpha \max\{\gamma, \frac 12\}}
	\right), 
	\]
	where we used that 
	$0 \leq \theta(\gamma) \leq \frac{\alpha \pi}{2}$
	in view of $d \leq \frac{\pi \alpha}{4}$. We now 
	make use again of \eqref{eq:ineq-1} to bound the 
	first factor, obtaining 
	\[
	\Re(\log(1 + e^{\gamma \pm id})^{\frac 1 \alpha}) \geq 
	\gamma^{\frac 1 \alpha} \cos \left(
	\frac{d}{\alpha \max\{\gamma, \frac 12\}}
	\right), 
	\]
	which implies the sought bound. 
\end{proof}

Even though we have made some
simplifications in the expressions, the asymptotic behavior
for $\gamma \to \pm \infty$ 
is tight. In addition, for the values of $\gamma$ close to 
zero, the bound is still quite descriptive of the actual
behavior, as we show in Figure~\ref{fig:boundgamma} for a few 
different values of $\alpha$.  

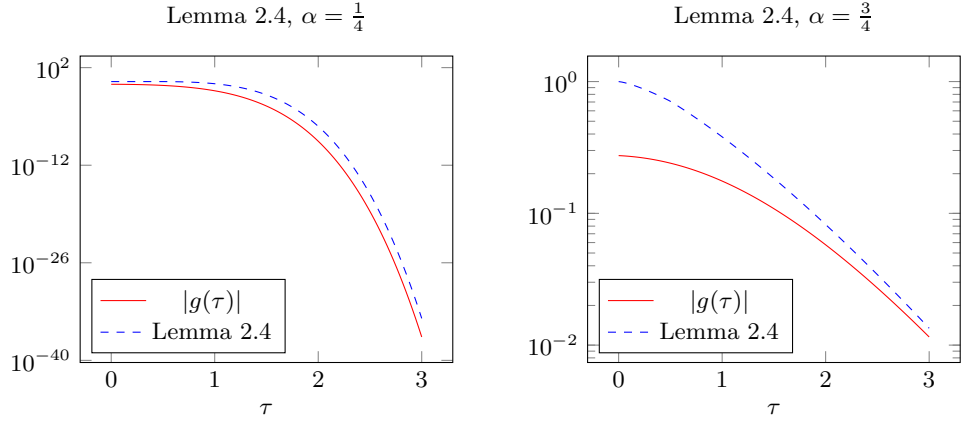
\begin{figure}
  \centering \small
  \begin{tikzpicture}
    \begin{semilogyaxis}[scale=.75, height = 7cm,
      title = {\Cref{bound-expdecay-ge0}, $\alpha = \frac{1}{4}$},
      xlabel = {$\tau$},
      legend pos = south west
    ]
      \addplot[red] table {gamma1.dat};
      \addplot[blue, dashed] table[x index = 0, y index = 2] {gamma1.dat};
      \legend{
        $|g(\tau)|$, 
        \Cref{bound-expdecay-ge0}
      }
    \end{semilogyaxis}
  \end{tikzpicture}~~~~~~~
  \begin{tikzpicture}
    \begin{semilogyaxis}[scale=.75, height = 7cm, 
      title = {\Cref{bound-expdecay-ge0}, $\alpha = \frac{3}{4}$},
      xlabel = {$\tau$},
      legend pos = south west
    ]
      \addplot[red] table {gamma3.dat};
      \addplot[blue, dashed] table[x index = 0, y index = 2] {gamma3.dat};
      \legend{
        $|g(\tau)|$, 
        \Cref{bound-expdecay-ge0}
      }
    \end{semilogyaxis}
  \end{tikzpicture}
  \caption{Bounds for the modulus of $g(\tau)$, obtained 
  for $\alpha \in \{\frac{1}{4}, \frac{3}{4}\}$ by \Cref{bound-expdecay-ge0}. 
  The value of $d$ in these examples is chosen as $d = \frac{\pi}{16}$. }
  \label{fig:boundgamma}
\end{figure}

We now have all the tools to give an explicit upper bound to 
the integral of the modulus of $g(\tau)$ over the boundary 
of $\mathcal D_d$. 

\begin{lemma}
  \label{lem:Dd-sqrtexp}
  For any $d$ satisfying $0 < d \leq \frac{\pi\alpha}{8}$
  with $0 < \alpha < 1$,  
  it holds:
  \[
    \norm{g}_{\mathcal D_d} = 
    \int_{\partial \mathcal D_d} |g(\tau)| \cdot |d\tau| \leq 
    2\left(
       1 + \log(2)+\frac{\Gamma(\alpha+1)}{(\xi\cos(\frac{\pi}{8}))^\alpha}
      \right).
  \]
\end{lemma}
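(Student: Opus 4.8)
The plan is to split the boundary integral $\int_{\partial \mathcal D_d} |g(\tau)|\,|d\tau|$ into the two horizontal lines $\tau = \gamma + id$ and $\tau = \gamma - id$ (the two contributions are symmetric, so I will bound one and multiply by $2$), and then split each line at $\gamma = 0$ into the regions $\gamma \le 0$ and $\gamma > 0$. On the left half ($\gamma \le 0$) I will invoke \Cref{bound-expdecay-le0}, whose hypothesis \eqref{eqn:bound-d-gammle0}, namely $\sin d \le \tfrac14\tan(\tfrac{\alpha\pi}{2})$, must be checked to follow from $d \le \tfrac{\pi\alpha}{8}$; this gives $|g(\tau)| \le e^{-|\gamma|}$, so that $\int_{-\infty}^{0} e^{-|\gamma|}\,d\gamma = 1$. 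On the right half ($\gamma > 0$) I will invoke \Cref{bound-expdecay-ge0}, whose hypothesis $d < \tfrac{\alpha\pi}{4}$ is clearly implied by $d \le \tfrac{\pi\alpha}{8}$, giving
\[
  |g(\tau)| \le e^{-\xi\,\gamma^{1/\alpha}\cos\!\left(\frac{d}{\alpha\max\{\gamma,1/2\}}\right)}.
\]

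Next I would further split the right-half integral at $\gamma = 1/2$. On $0 < \gamma \le 1/2$ the cosine argument is $\tfrac{d}{\alpha/2} = \tfrac{2d}{\alpha} \le \tfrac{\pi}{4}$, so $\cos(\cdot) \ge \cos(\tfrac{\pi}{8})$ is a crude but clean bound; since additionally $\gamma^{1/\alpha} \le \gamma$ is false in general for $\gamma<1$ — actually $\gamma^{1/\alpha}\le\gamma$ holds for $\gamma\le1$ since $1/\alpha\ge1$ — I get $|g(\tau)|\le e^{-\xi\cos(\pi/8)\gamma^{1/\alpha}}\le 1$, contributing at most $\int_0^{1/2}1\,d\gamma = \tfrac12$. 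Hmm, that gives $\tfrac12$ rather than the $\log 2$ appearing in the statement; more likely the intended bound on $0<\gamma\le 1/2$ keeps the factor $\tfrac{1}{|1+e^{-\gamma}\cos d|}$ rather than discarding it, and one integrates $\tfrac{1}{1+e^{-\gamma}}$ over a suitable range to produce the $1+\log 2$ term — so I would retain that denominator on the whole line $\gamma\in(-\infty,1/2]$, bounding $|g(\tau)|\le \tfrac{1}{1+e^{-\gamma}}$ there, and compute $\int_{-\infty}^{1/2}\tfrac{d\gamma}{1+e^{-\gamma}} = \log(1+e^{1/2})$, which one then bounds above by $1+\log 2$. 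This reorganization (left half plus the slab $0<\gamma\le1/2$ handled by the denominator bound, the region $\gamma>1/2$ handled by the exponential decay from \Cref{bound-expdecay-ge0}) is the natural way to get exactly the three terms in the claimed bound.

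For the tail $\gamma > 1/2$, the cosine argument is $\tfrac{d}{\alpha\gamma} < \tfrac{d}{\alpha/2} = \tfrac{2d}{\alpha}\le \tfrac{\pi}{4}$, so again $\cos(\cdot)\ge\cos(\tfrac{\pi}{8})$ — wait, for $\gamma>1/2$ the argument only gets smaller, so in fact $\cos(\cdot)\ge\cos(\pi/8)$ there too — and $|g(\tau)|\le e^{-\xi\cos(\pi/8)\,\gamma^{1/\alpha}}$. Then
\[
  \int_{1/2}^{\infty} e^{-\xi\cos(\pi/8)\gamma^{1/\alpha}}\,d\gamma \le \int_0^\infty e^{-\xi\cos(\pi/8)\gamma^{1/\alpha}}\,d\gamma = \frac{\alpha\,\Gamma(\alpha)}{(\xi\cos(\pi/8))^{\alpha}} = \frac{\Gamma(\alpha+1)}{(\xi\cos(\pi/8))^{\alpha}},
\]
using the substitution $u = \xi\cos(\pi/8)\gamma^{1/\alpha}$ and the identity $\int_0^\infty e^{-u} u^{\alpha-1}\,du = \Gamma(\alpha)$ together with $\alpha\Gamma(\alpha) = \Gamma(\alpha+1)$. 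Summing the two line contributions gives the factor $2$ out front and the three terms $1$, $\log 2$ (from bounding $\log(1+e^{1/2})\le 1+\log2$, absorbing the stray constant), and $\Gamma(\alpha+1)/(\xi\cos(\pi/8))^\alpha$.

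The main obstacle I anticipate is bookkeeping rather than conceptual: verifying that $d \le \tfrac{\pi\alpha}{8}$ really does imply the hypothesis \eqref{eqn:bound-d-gammle0} of \Cref{bound-expdecay-le0} for all $\alpha \in (0,1)$ — one needs $\sin(\tfrac{\pi\alpha}{8}) \le \tfrac14\tan(\tfrac{\pi\alpha}{2})$, which should follow from $\sin x \le x$, $\tan y \ge y$, and the factor-of-$4$ slack in the constants — and, secondarily, getting the split points and the retained/discarded denominator exactly right so that the elementary integrals evaluate to the precise constants $1$, $1+\log 2$, and $\Gamma(\alpha+1)(\xi\cos(\pi/8))^{-\alpha}$ claimed in the statement.
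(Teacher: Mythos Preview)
Your overall strategy matches the paper's: exploit the symmetry $|g(\overline\tau)|=|g(\tau)|$ to reduce to one horizontal line, split that line into a left piece, a bounded middle slab, and an exponentially decaying tail, and invoke \Cref{bound-expdecay-le0} and \Cref{bound-expdecay-ge0} on the respective pieces. Your check that $d\le\frac{\pi\alpha}{8}$ implies \eqref{eqn:bound-d-gammle0} via $\sin d\le d\le\frac{\pi\alpha}{8}\le\frac14\tan(\frac{\pi\alpha}{2})$ is exactly what the paper does.

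However, there is a concrete bookkeeping error that breaks the argument as written. You split the positive half-line at $\gamma=\tfrac12$, and then claim that on the tail $\gamma\ge\tfrac12$ the cosine argument satisfies $\frac{d}{\alpha\max\{\gamma,1/2\}}\le\frac{\pi}{8}$, hence $\cos(\cdot)\ge\cos(\pi/8)$. This is false: at $\gamma=\tfrac12$ the argument equals $\frac{2d}{\alpha}\le\frac{\pi}{4}$, which only gives $\cos(\cdot)\ge\cos(\pi/4)$, a strictly weaker bound since $\cos(\pi/4)<\cos(\pi/8)$. With your split the Gamma term would carry $(\xi\cos(\pi/4))^{-\alpha}$, not $(\xi\cos(\pi/8))^{-\alpha}$, and the lemma as stated would not follow. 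The paper instead splits at $\gamma=1$: for $\gamma\ge1$ one has $\frac{d}{\alpha\gamma}\le\frac{d}{\alpha}\le\frac{\pi}{8}$, which is precisely what produces the $\cos(\pi/8)$ constant.

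This also explains where the constants $1$ and $\log 2$ actually come from, and you need not push the denominator bound into positive $\gamma$ at all. The paper bounds the slab $[0,1]$ trivially by $\int_0^1 1\,d\gamma=1$ (the integrand is $\le1$ there by \Cref{bound-expdecay-ge0}), and on $(-\infty,0]$ it uses the \emph{intermediate} inequality $|g(\tau)|\le|1+e^{-\tau}|^{-1}$ from \Cref{bound-expdecay-le0} (rather than the cruder $e^{-|\gamma|}$) to get $\int_{-\infty}^{0}\frac{d\gamma}{1+e^{-\gamma}}=\log 2$. Summing $1+\log 2+\Gamma(\alpha+1)(\xi\cos(\pi/8))^{-\alpha}$ and multiplying by $2$ gives the statement directly, with no need to estimate $\log(1+e^{1/2})$.
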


\begin{proof}
  First, we note that for any $d$ in the region
  of interest we have
  \[
    \sin(d) \leq d \leq \frac{\pi\alpha}{8} \leq \frac 14 
      \tan\left(
        \frac{\pi\alpha}{2}
      \right), 
  \]
  and therefore the hypotheses of \Cref{bound-expdecay-le0}
  and \Cref{bound-expdecay-ge0} are satisfied. In addition, 
  thanks to the property $|g(\tau)| = |g(\overline \tau)|$, we 
  may rewrite the integral as 
  \[
    \int_{\partial \mathcal D_d} |g(\tau)| \cdot |d\tau| = 
    2 \int_0^\infty |g(\gamma + \im d)| d\gamma + 
    2 \int_{-\infty}^0 |g(\gamma + \im d)| d\gamma. 
  \]
  The integrands can be dealt with separately. In $(-\infty, 0]$ 
  we can use \Cref{bound-expdecay-le0} to obtain 
  the following bound:
  \[
    2 \int_{-\infty}^0 |g(\gamma + \im d)| d\gamma \leq 
    2 \int_{-\infty}^0 \frac{1}{1 + e^{-\tau}} d\gamma = 2\log(2).
  \]
  Similarly, we can bound the integral between $0$ and 
  $\infty$ as follows:
  \begin{align*}
    2 \int_0^\infty |g(\gamma + \im d)| d\gamma &\leq 
    2\int_0^{1} d\gamma + 
    2\int_{1}^{\infty} e^{-\xi |\gamma|^{\frac 1\alpha} \cos(\frac{d}{\alpha})} d\gamma  \\
    &\leq 2 + \frac{2\Gamma(\alpha+1)}{(\xi\cos(\frac d \alpha))^\alpha} 
    = \leq 2 + \frac{2\Gamma(\alpha+1)}{(\xi\cos(\frac{\pi}{8}))^\alpha}, 
  \end{align*}
  where in the last inequality we have used $d \leq \frac{\pi\alpha}{8}$. 
  The result follows by combining these two bounds. 
\end{proof}

\begin{remark}
  The bound for the integrand in $[0, \infty]$ is not asymptotically 
  sharp as used in the proof of \Cref{lem:Dd-sqrtexp}, since 
  for $\gamma \to \infty$ we have 
  $\cos(\frac{d}{\alpha \gamma}) \to 1$, and instead we have 
  replaced it with $\cos(\pi/8) \approx 0.9238795\ldots$; however, 
  this does not make a dramatic difference in practice, and makes 
  the result much more readable. 
\end{remark}

Thanks to the estimate of \Cref{lem:Dd-sqrtexp}, we may now 
approximate $\xi^{-\alpha}$ with an infinite series as follows:
\[
  \xi^{-\alpha} = h \sum_{j \in \mathbb Z} g(jh) + \epsilon_h, \qquad 
  |\epsilon_h| \leq 2\left(
    1 + \log(2)+\frac{\Gamma(\alpha+1)}{(\xi\cos(\frac{\pi}{8}))^\alpha}
   \right) e^{-2 \pi d / h}. 
\]
However, this not yet a practical algorithm, since we need to 
truncate the series to a finite sum. We use the following 
notation:
\begin{equation*}
	E(g,h)=h\sum_ {j\in \mathbb {Z}}g(jh) \quad \text{and} \quad
	 E_{N_-,N_+}(g,h)=h\sum_{j=-N_-}^{N_+} g(jh). 
\end{equation*}
We need an estimate for the error introduced by truncating 
the sum to $N_+$ positive terms, and $N_-$ negative ones. We 
state the following lemma, which is tailored to the decay 
properties of the function $g(\tau)$ considered in this section. 
\begin{lemma} \label{lem:bound-NmNp}
  Let $c_-,c_+$ and $\beta$ be positive constants such 
  that 
	\begin{equation}\label{eqn:bound g-}
		g(x)\le c_-e^{-|x|} \quad \text{for } x \le 0,
	\end{equation}
	and
	\begin{equation}\label{eqn:bound g+}
		g(x)\le c_+e^{-\beta|x|^{\frac{1}{\alpha}}} \quad \text{for }x\ge 0.
	\end{equation}
  Then, the remainder $E(g,h)-E_{N_-,N+}(g,h)$ satisfies:
	\begin{equation}\label{eqn:bound E-En}
		\left|E(g,h)-E_{N_-,N+}(g,h)\right|\le c_-\frac{ e^{-N_-h}}{h} + c_+\frac{\alpha e^{-\beta (N_+ h)^ {\frac{1}{\alpha}}}}{\beta h^{\frac{1}{\alpha}}},
	\end{equation}
\end{lemma}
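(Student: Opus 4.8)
The plan is to write the truncation remainder as the sum of its two tails,
\[
  E(g,h) - E_{N_-,N_+}(g,h) = h\sum_{k=N_-+1}^{\infty} g(-kh) + h\sum_{k=N_++1}^{\infty} g(kh),
\]
and to estimate each tail with the matching decay hypothesis. The left tail is routine: inserting \eqref{eqn:bound g-} turns it into a geometric series, $h\sum_{k>N_-} g(-kh)\le c_- h\sum_{k>N_-}e^{-kh}$, which we bound by comparison with $\int_{N_-}^{\infty}e^{-xh}\,dx = e^{-N_-h}/h$ (legitimate because $x\mapsto e^{-xh}$ is decreasing); this yields a term bounded by $c_- e^{-N_-h}/h$, the first summand of \eqref{eqn:bound E-En}.

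The right tail is where the actual work lies. Inserting \eqref{eqn:bound g+} we must control $c_+ h\sum_{k>N_+} e^{-\beta(kh)^{1/\alpha}}$, which is no longer geometric. Since $k\mapsto e^{-\beta(kh)^{1/\alpha}}$ is decreasing, the integral comparison gives $h\sum_{k>N_+} e^{-\beta(kh)^{1/\alpha}}\le \int_{N_+h}^{\infty}e^{-\beta t^{1/\alpha}}\,dt$, with the outer factor $h$ playing exactly the role of the mesh that converts the sum into a Riemann sum. It then remains to find a clean majorant for this incomplete integral. The key observation I would use is the identity
\[
  e^{-\beta t^{1/\alpha}} = -\frac{\alpha}{\beta}\, t^{1-\frac 1\alpha}\,\frac{d}{dt}\,e^{-\beta t^{1/\alpha}},
\]
together with the fact that $t\mapsto t^{1-1/\alpha}$ is \emph{decreasing} on $(0,\infty)$, because $0<\alpha<1$ makes the exponent negative; hence on $[N_+h,\infty)$ it is bounded above by its endpoint value $(N_+h)^{1-1/\alpha}$. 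Pulling this constant out of the integral and integrating the remaining exact derivative yields
\[
  \int_{N_+h}^{\infty}e^{-\beta t^{1/\alpha}}\,dt \le \frac{\alpha}{\beta}\,(N_+h)^{1-\frac 1\alpha}\,e^{-\beta (N_+h)^{1/\alpha}}.
\]
Loosening $(N_+h)^{1-1/\alpha}\le h^{-1/\alpha}$ (valid, e.g., for $N_+\ge 1$ and $h\le 1$) then brings the bound into the stated form $c_+\alpha\, e^{-\beta(N_+h)^{1/\alpha}}/(\beta h^{1/\alpha})$.

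Summing the two tail estimates gives \eqref{eqn:bound E-En}. The only genuine obstacle is the right tail: unlike the left one it cannot be summed in closed form, so one must pass to an integral and then produce a sharp closed-form upper bound for an incomplete-gamma-type integral. The derivative–prefactor identity above — equivalently, the monotonicity of $u\mapsto u^{\alpha-1}$ after the substitution $u=\beta t^{1/\alpha}$ — is precisely what makes this step go through, and it is what generates the $\alpha/\beta$ and $h^{-1/\alpha}$ factors appearing in \eqref{eqn:bound E-En}.
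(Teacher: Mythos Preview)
Your proposal is correct and follows essentially the same path as the paper's proof: split the remainder into the two tails, dominate each sum by the corresponding integral using monotonicity, and for the right tail reduce the incomplete\nobreakdash-gamma\nobreakdash-type integral to an elementary one by bounding a power prefactor. The paper carries out that last step via the substitution $x=k^{1/\alpha}$ and then uses $x^{\alpha-1}\le 1$, which is exactly your derivative identity together with the monotonicity of $t^{1-1/\alpha}$; your side conditions $N_+\ge 1$, $h\le 1$ for the final loosening $(N_+h)^{1-1/\alpha}\le h^{-1/\alpha}$ correspond to the paper's implicit requirement $N_+^{1/\alpha}\ge 1$.
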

\begin{proof} Since
	\begin{equation*}
		E(g,h)-E_{N_-,N+}(g,h)=\sum_{k>N_-}g(-kh)+\sum_{k>N_+}g(kh),
	\end{equation*}
	using \eqref{eqn:bound g-} and \eqref{eqn:bound g+} we have
	\begin{equation*}
		\begin{aligned}
			E(g,h)-E_{N_-,N+}(g,h)&\le c_-\sum_{k>N_-}e^{-kh}+c_+\sum_{k>N_+}e^{-\beta (kh)^{\frac{1}{\alpha}}}\\
			&\le c_-\int_{N_-}^\infty e^{-kh}dk+c_+\int_{N_+}^\infty e^{-\beta (kh)^{\frac{1}{\alpha}}}dk \\
			&= c_-\frac{e^{-N_- h}}{h}+c_+\int_{k>N_+}^\infty e^{-\beta (kh)^{\frac{1}{\alpha}}}dk.
		\end{aligned}
	\end{equation*}
	To give an upper bound to the last integral let $x=k^{\frac{1}{\alpha}},$ we have 
	\begin{equation*}
		\int_{k>N_+}^\infty e^{-\beta (kh)^{\frac{1}{\alpha}}}dk=\alpha\int_{x>N_+^{\frac{1}{\alpha}}}^\infty \frac{e^{-\beta xh^{\frac{1}{\alpha}}}}{x^{1-\alpha}}\le \alpha\int_{x>N_+^{\frac{1}{\alpha}}}^\infty e^{-\beta xh^{\frac{1}{\alpha}}}=\alpha \frac{e^{-\beta  (N_+h)^{\frac{1}{\alpha}}}}{\beta h^{\frac{1}{\alpha}}}.
	\end{equation*}
\end{proof}

We now address the problem of determining the number of 
terms required to have a prescribed accuracy $\epsilon$. 
\Cref{thm:sinc} suggests that $h$ should be chosen to 
have $e^{-2\pi d / h} \approx \epsilon$. If $N_+$ and $N_-$
are also chosen accordingly, this enables to guarantee the
required accuracy, as predicted by the following result. 

\begin{theorem} \label{thm:exp-bound}
  Let $\epsilon > 0$ and $0 < \alpha < 1$. 
  Then, for any $0 < d \leq \frac{\pi\alpha}{8}$, and 
  \[
    h = \frac{2\pi d}{\log(\epsilon^{-1})}, \qquad 
    N_- = \frac{2\pi d}{h^2}, \qquad 
    N_+ = \left(
      \frac{2\pi d h^{-\frac{\alpha+1}{\alpha}}}{\beta}
    \right)^\alpha, 
  \]
  where $\beta = \cos(2d/\alpha) \geq \cos(\pi/4)$, it holds 
  \begin{align*}
  |\xi^{-\alpha} - E_{N_-,N_+}(g, h)| &\leq 
  \left( 
         \norm{g}_{\mathcal D_d} + \frac{1}{h} + 
         \frac{1}{\beta h^{\frac{1}{\alpha}}}
      \right) \epsilon.
  \end{align*}
  If $\epsilon$ is chosen smaller than $e^{- \pi^2 / 4} \approx 0.085$, then 
  the error can be bounded by
  \begin{align*}
    |\xi^{-\alpha} - E_{N_-,N_+}(g, h)| &\leq  2 \left[
        1 + \log(2) + \frac{\Gamma(\alpha + 1)}{\cos(\pi/8)^\alpha}
        + \cos(\pi/4)^{-1} \left(
          \frac{4 \log(\epsilon^{-1})}{\pi^2 \alpha}
        \right)^{\frac 1 \alpha}
      \right] \epsilon. 
  \end{align*}
  In particular, for large $N$ and small $\epsilon$
  we the asymptotics
  $\epsilon \sim \mathcal O(e^{-\sqrt{2\pi d N}})$ 
  and $N \sim \mathcal O(\log^2(\frac{1}{\epsilon}) / 2\pi d)$ 
  hold up 
  to logarithmic factors. 
\end{theorem}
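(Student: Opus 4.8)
The plan is to split the error with a triangle inequality,
\[
  |\xi^{-\alpha} - E_{N_-,N_+}(g,h)| \le |\xi^{-\alpha} - E(g,h)| + |E(g,h) - E_{N_-,N_+}(g,h)|,
\]
treating the sinc quadrature error and the series truncation error separately, and only at the very end inserting the prescribed values of $h$, $N_-$, $N_+$. The quadrature error $|\xi^{-\alpha} - E(g,h)|$ is already under control: by \Cref{lem:analiticity} the integrand $g$ of \eqref{eq:cv1} is holomorphic on $\mathcal D_d$ for every $d \le \frac{\pi\alpha}{8} < \pi$, and by \Cref{lem:Dd-sqrtexp} its boundary norm is finite with $\norm{g}_{\mathcal D_d} \le 2\bigl(1 + \log 2 + \frac{\Gamma(\alpha+1)}{(\xi\cos(\pi/8))^\alpha}\bigr)$, so \Cref{thm:sinc} gives $|\xi^{-\alpha} - E(g,h)| \le \norm{g}_{\mathcal D_d}\,e^{-2\pi d/h}$.

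For the truncation error I would apply \Cref{lem:bound-NmNp} with the decay constants $c_- = c_+ = 1$ and $\beta = \cos(2d/\alpha)$. These are admissible: for $x \le 0$, \Cref{bound-expdecay-le0} gives $g(x) \le e^{-|x|}$ (its hypothesis \eqref{eqn:bound-d-gammle0} holds because $\sin d \le d \le \frac{\pi\alpha}{8} \le \frac14\tan\frac{\pi\alpha}{2}$), and for $x \ge 0$, \Cref{bound-expdecay-ge0} together with $\xi \ge 1$ and the monotonicity of $\cos$ on $[0,\frac\pi2]$ gives $g(x) \le e^{-\cos(2d/\alpha)\,x^{1/\alpha}}$, once one notes $\frac{d}{\alpha\max\{x,1/2\}} \le \frac{2d}{\alpha}$ for all $x \ge 0$; the constraint $d \le \frac{\pi\alpha}{8}$ moreover yields $\beta \ge \cos(\pi/4)$. \Cref{lem:bound-NmNp} then gives $|E(g,h) - E_{N_-,N_+}(g,h)| \le \frac{e^{-N_-h}}{h} + \frac{\alpha\,e^{-\beta(N_+h)^{1/\alpha}}}{\beta h^{1/\alpha}}$. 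The substitution step is then a short computation: $h = \frac{2\pi d}{\log(\epsilon^{-1})}$ gives $\frac{2\pi d}{h} = \log(\epsilon^{-1})$ and hence $e^{-2\pi d/h} = \epsilon$; $N_- = \frac{2\pi d}{h^2}$ gives $N_- h = \log(\epsilon^{-1})$ and hence $e^{-N_- h} = \epsilon$; and the exponents in $N_+ = \bigl(\frac{2\pi d\, h^{-(\alpha+1)/\alpha}}{\beta}\bigr)^\alpha$ are rigged so that $\beta(N_+h)^{1/\alpha} = \frac{2\pi d}{h} = \log(\epsilon^{-1})$, hence $e^{-\beta(N_+h)^{1/\alpha}} = \epsilon$. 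Bounding $\alpha \le 1$ and adding the three contributions gives the first asserted estimate $\bigl(\norm{g}_{\mathcal D_d} + \frac1h + \frac1{\beta h^{1/\alpha}}\bigr)\epsilon$.

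For the cleaner bound I would specialize to $d = \frac{\pi\alpha}{8}$, the largest admissible width — this is what makes the estimate depend only on $\alpha$ and $\epsilon$, since for general small $d$ the term $\frac1{\beta h^{1/\alpha}}$ is not controlled by $\alpha$ and $\epsilon$ alone. Then $2\pi d = \frac{\pi^2\alpha}{4}$, so $\frac1h = \frac{4\log(\epsilon^{-1})}{\pi^2\alpha}$, $\beta = \cos(\pi/4)$, and $\frac1{\beta h^{1/\alpha}} = \cos(\pi/4)^{-1}\bigl(\frac{4\log(\epsilon^{-1})}{\pi^2\alpha}\bigr)^{1/\alpha}$, while \Cref{lem:Dd-sqrtexp} and $\xi \ge 1$ give $\norm{g}_{\mathcal D_d} \le 2\bigl(1 + \log 2 + \frac{\Gamma(\alpha+1)}{\cos(\pi/8)^\alpha}\bigr)$. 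The one genuinely nontrivial point is absorbing the linear term $\frac1h$ into the $\frac1\alpha$-power term: setting $u := \frac{4\log(\epsilon^{-1})}{\pi^2\alpha}$, the hypothesis $\epsilon < e^{-\pi^2/4}$ forces $\log(\epsilon^{-1}) > \frac{\pi^2}{4}$, hence $u > \frac1\alpha \ge 1$, and since the exponent $\frac1\alpha \ge 1$ and $u \ge 1$ we get $u \le u^{1/\alpha} \le \cos(\pi/4)^{-1}u^{1/\alpha}$; thus $\frac1h + \frac1{\beta h^{1/\alpha}} \le 2\cos(\pi/4)^{-1}u^{1/\alpha}$, and collecting the terms yields the second asserted bound.

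Finally, for the asymptotics, write $L := \log(\epsilon^{-1})$; then $N_- = \frac{L^2}{2\pi d}$ and $N_+ = \frac{L^{1+\alpha}}{2\pi d\,\beta^\alpha}$, so $N = N_- + N_+$ is dominated by $N_-$ because $1+\alpha < 2$. Hence $L \sim \sqrt{2\pi d\,N}$ and $\epsilon = e^{-L} \sim e^{-\sqrt{2\pi d\,N}}$, while the multiplicative prefactor grows only like $O(L^{1/\alpha})$ and therefore perturbs $L$ by no more than logarithmic corrections, giving $N \sim \log^2(\epsilon^{-1})/(2\pi d)$ up to such factors. I expect the principal obstacle to be exactly the bookkeeping underlying the cleaner bound — recognizing that one must fix $d = \frac{\pi\alpha}{8}$, and that the smallness hypothesis $\epsilon < e^{-\pi^2/4}$ is precisely what makes $u > 1/\alpha \ge 1$ and thus lets the $\frac1\alpha$-power term dominate the linear one; identifying the correct value of $\beta$ (and checking the exponents in $N_+$ collapse as claimed) is delicate but mechanical.
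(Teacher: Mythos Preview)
Your proof is correct and follows essentially the same route as the paper's: split via the triangle inequality, control the sinc-quadrature error by \Cref{thm:sinc} and \Cref{lem:Dd-sqrtexp}, control the truncation error by \Cref{lem:bound-NmNp} with $c_\pm = 1$ and $\beta = \cos(2d/\alpha)$, then substitute the prescribed $h, N_-, N_+$ so that all three exponentials collapse to $\epsilon$. Your explicit specialization to $d = \frac{\pi\alpha}{8}$ for the second bound is in fact slightly more careful than the paper, which tacitly uses this value when passing from $\frac{2}{\beta h^{1/\alpha}}$ to $\frac{2}{\cos(\pi/4)}\bigl(\frac{4\log(\epsilon^{-1})}{\pi^2\alpha}\bigr)^{1/\alpha}$.
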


\begin{proof}
  Leveraging \Cref{thm:sinc}, we can bound the quadrature error by 
  \[
    |\xi^{-\alpha} - E(g, h)| \leq \norm{g}_{\mathcal D_d} e^{-2 \pi d / h}. 
  \]
  We now show that the proposed choices of $N_-$ and $N_+$ provide 
  an error bound with the same exponential convergence, but 
  different constants in front. Using \Cref{lem:bound-NmNp} we 
  obtain:
  \begin{align*}
    |E(g,h) - E_{N_-, N_+}(g,h)| &\leq 
      c_- \frac{e^{-N_-h}}{h} + 
      c_+ \frac{\alpha e^{-\beta (N_+h)^\frac{1}{\alpha}}}{\beta h^{\frac{1}{\alpha}}}.  \\
      & \leq \left( 
         \frac{c_-}{h} + 
         \frac{\alpha c_+}{\beta h^{\frac{1}{\alpha}}} + 
      \right) e^{-2\pi d / h}, 
  \end{align*}
  where $\beta = \cos(2d / \alpha)$ 
  applying Lemma~\ref{bound-expdecay-ge0}
  with the inequality $\max\{ \frac{1}{2}, \gamma \} \geq \frac{1}{2}$. 
  We have that $c_- = 1$ thanks to \Cref{bound-expdecay-le0} 
  and $c_+ = 1$, thanks to
  \Cref{bound-expdecay-ge0}. The final bound is obtained 
  using the explicit expression for $\norm{g}_{\mathcal D_d}$ together with 
  \[
    \epsilon \leq e^{-\pi^2 / 4} \implies 
    h = \frac{2 \pi d}{\log(\epsilon^{-1})} \leq 
    \frac{\pi^2 \alpha}{4 \log(\epsilon^{-1})} \leq 
    \frac{\pi^2}{4 \log(\epsilon^{-1})} \leq 1, 
  \]
  which implies $\frac{1}{\beta h^{frac 1 \alpha}} \geq \frac{1}{h}$, and 
  therefore allows to give the upper bound 
  \[
    \frac{1}{h} + \frac{1}{\beta h^{\frac 1 \alpha}} \leq 
    \frac{2}{\beta h^{\frac 1 \alpha}} \leq \frac{2}{\cos(\pi/4)} \left(
      4 \frac{\log(\epsilon^{-1})}{\pi^2 \alpha}
    \right)^{\frac 1 \alpha}.
  \] 
  The claim on the asymptotic growth for $\epsilon,h \to 0$ 
  follows by noting that the dominant term in $N$ is 
  $N_- \sim \mathcal O(h^{-2})$. 
\end{proof}

We now verify the convergence predicted by these results
by considering different $\xi \in [1, \infty)$, 
logarithmically spaced on $[1, 10^6]$. For these 
values, we compute the exponential sum approximating $\xi^{-\alpha}$
for $\alpha \in \{ 0.25, 0.75 \}$. The results, including 
the asymptotic bound from \Cref{thm:exp-bound}, are 
reported in \Cref{fig:exp1}. It is visible how \Cref{thm:exp-bound}
accurately describes the asymptotic rate of convergence 
of the approximation. 

We note that to reach machine precision 
a non trivial amount of exponentials has to be computed. 
In addition, when $\alpha$ is small $d$ has to be chosen 
small as well, obtaining a slower convergence speed, as 
predicted by \Cref{thm:exp-bound}. 

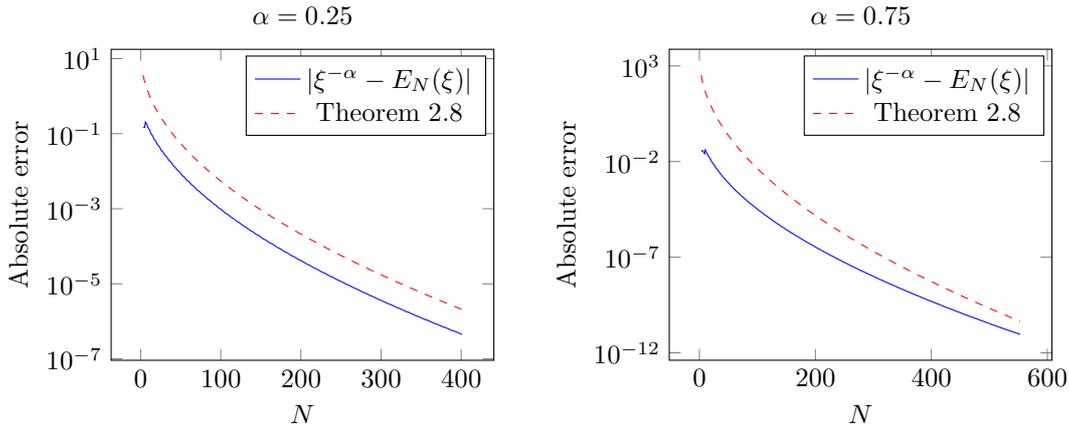
\begin{figure} \centering
  \begin{tikzpicture}
    \begin{semilogyaxis}[
      width=.45\linewidth,
      xlabel = {$N$}, ylabel = {Absolute error}, 
      title = {$\alpha = 0.25$}
    ]
      \addplot[blue] table {./example1_0.250000.dat};
      \addplot[dashed, red] table[y index = 2] {./example1_0.250000.dat};
      \legend{
        $|\xi^{-\alpha} - E_N(\xi)|$, 
        \Cref{thm:exp-bound}
      }
    \end{semilogyaxis}
  \end{tikzpicture}~~~~~
  \begin{tikzpicture}
    \begin{semilogyaxis}[
      width=.45\linewidth,
      xlabel = {$N$}, ylabel = {Absolute error}, 
      title = {$\alpha = 0.75$}
    ]
      \addplot[blue] table {./example1_0.750000.dat};
      \addplot[dashed, red] table[y index = 2] {./example1_0.750000.dat};
      \legend{
        $|\xi^{-\alpha} - E_N(\xi)|$, 
        \Cref{thm:exp-bound}
      }
    \end{semilogyaxis}
  \end{tikzpicture}

  \caption{Absolute errors of the exponential sum approximation 
  for $\xi^{\alpha}$. The error is the maximum 
  for $100$ logarithmically spaced samples over $[1, 10^6]$. 
  The value of $\alpha$ is chosen as $0.25$ and $0.75$. The convergence 
  speed for $0.25 \leq \alpha \leq 0.75$ interpolates 
  these two examples. 
  }
  \label{fig:exp1}
\end{figure}

    \section{Low-rank approximability}
    \label{sec:appr-tensors}    

    We now make use of the results developed in \Cref{sec:expsums}
    to prove that the solutions of Kronecker-structured linear systems 
    inherit 
    the low-rank tensor structure of the right hand side. 
    Recall that we are interested in linear systems of the form:
    \begin{equation} \label{eq:kronsum-linsys}
      \mathcal A^{\alpha} x = c, \qquad 
      \mathcal A = \bigoplus_{i = 1}^d A_i, \qquad 
      A_i \in \mathbb C^{n_i \times n_i}. 
    \end{equation}
    where as in \eqref{eq:kronsum}
    the ``$\oplus$'' symbol denotes 
    the \emph{Kronecker sum}.
    The vectors $x$ and $c$ may be naturally reshaped 
    into $n_1 \times \ldots \times n_d$ tensors; we denote these 
    reshaped versions with the capital letter $X, C$, respectively; we 
    will use this convention throughout the section; for instance, 
    for the vector $x$ (and tensor $X$), we have
    the correspondence:
    \[
      X \in \mathbb C^{n_1 \times \ldots \times n_d}
      \longleftrightarrow
      x = \mathrm{vec}(X) \in \mathbb C^{n_1 \ldots n_d}. 
    \]
    The linear system \eqref{eq:kronsum-linsys}
    can be rephrased as computing
    $x = f(\mathcal A) c$, where $f(z) = z^{-\alpha}$, and has 
    therefore a very natural connection with the exponential sum 
    approximation that we have discussed in the previous section.

    When dealing with high-dimensional problems (i.e., the integer
    $d$ is large) it is natural to assume that some
    low-rank structure is 
    present in the tensor $C$. If this assumption is not satisfied, 
    it is unlikely 
    that storing $C$ is possible at all. 
    
    For analogous reasons, we need to guarantee that $X$ is endowed with 
    a similar structure: otherwise, there is little hope of computing it, 
    if there is not sufficient storage for memorizing it. The exponential
    sum approximation can be used to guarantee
    that $X$ inherits the low-rank structure from 
    the right hand side $C$, and this is precisely the goal 
    of this Section. 

    In contrast to what happens with matrices, there are many 
    competing definition of a low-rank tensor. In this work, 
    we consider tensors with low CP-rank, TT-rank, and 
    multilinear rank \cite{grasedyck2013literature}. 

    We briefly recall the definition and properties of these 
    families in \Cref{sec:tensor-def}, and then show the results 
    obtainable through the exponential sum approximation in 
    \Cref{sec:appr-tensors}. 

    \subsection{Low-rank tensor formats}
    \label{sec:tensor-def}

    A natural way to define the rank of a $d$-dimensional 
    tensor $X \in \mathbb C^{n_1 \times \ldots\times n_d}$ is 
    as the \emph{minimum length of a ``low-rank decomposition''}, 
    here written for simplicity on the vectorization $x = \vect(X)$:
    \[
      x = u_{1,1} \otimes \cdots \otimes u_{1,d} + 
        \ldots + 
      u_{k,1} \otimes \cdots \otimes u_{k,d}. 
    \]
    This is usually called just \emph{tensor rank} 
    or \emph{CP rank}, and the above decomposition 
    is called a \emph{Canonical Polyadic Decomposition}
    (CPD or CP decomposition). Despite its simplicity, 
    computing such decomposition is numerically challenging for 
    large $d$ \cite{kolda2009tensor}, in contrast to what 
    happens when $d = 2$, when 
    we can leverage the singular value decomposition (SVD). 

    For this reason, several alternative definitions of 
    low-rank tensors (and the associated decompositions)
    have been introduced in recent years. We mention the 
    multilinear singular value decomposition \cite{de2000multilinear}, 
    often shortened as HOSVD (High Order SVD), and the 
    tensor train format \cite{oseledets2011tensor}. Both 
    these formats have an SVD-like procedure that allows to 
    obtain the best (or at least quasi-optimal) 
    low-rank approximation to a tensor $X$.
    To discuss the properties of these formats, 
    we shall 
    introduce the definition of 
    unfolding. 

    \begin{definition}
      The $i$-th mode unfolding $X^{(i)}$ 
      of a tensor $X$ is the matrix 
      obtained by stacking the 
      vectors obtained by collecting in a 
      matrix 
      the vectors $X_{j_i}$ with 
      all entries of the tensor with 
      the $i$-th index equal to $j_i$, 
      i.e., 
    \[
      X^{(i)} = \begin{bmatrix}
        & \vect(X_1)^T & \\ 
        & \vdots & \\
        & \vect(X_{n_d})^T & \\ 
      \end{bmatrix} \in \mathbb C^{n_i \times (n / n_i)}, 
    \]
    where $n = \prod_{i=1}^d n_i$. 
    \end{definition}

    The unfoldings can be used to define the multilinear rank 
    of a tensor $X$. 

    \begin{definition}[\cite{de2000multilinear}]
      The \emph{multilinear rank} of a tensor $X$ is the 
      tuple $r = (r_1, \ldots, r_d)$, where 
      $
        r_i = \mathrm{rank}(X^{(i)}), 
      $
      and $X^{(i)}$ is the $i$-th mode unfolding of $X$. 
    \end{definition}

    We often say that a tensor has multilinear rank 
    smaller than $r = (r_1, \ldots, r_d)$, to mean that 
    the rank is component-wise smaller. 
    We can use matrices to act on tensors, as described by 
    the following. 

    \begin{definition}
      Given a matrix $A \in \mathbb C^{m_j \times n_j}$ 
      and a tensor $X \in \mathbb C^{n_1 \times \ldots\times n_d}$, the 
      \emph{$j$-th mode product of $X$ times $A$}, denoted 
      by $X \times_j A$, is the $d$-dimensional 
      tensor 
      $Y \in \mathbb C^{n_1 \times \dots \times n_{j-1} \times m_j \times n_{j+1} \times \dots \times n_d}$. 
      defined by:
      \[
        Y_{i_1, \dots, i_d} = 
          \sum_{k=1}^{n_j} A_{i_j k} X_{i_1, \dots, i_{j-1}, k, i_{j+1}, \dots, i_d}. 
      \] 
    \end{definition}

    If $d = 2$ and therefore $X$ is a 
    matrix, we have $X \times_1 A = AX$ and 
    $X \times_2 A = XA^T$. Hence, this operation can be seen 
    as the high-order generalization of left and right 
    matrix multiplication.     
    We remark a few useful properties that relate 
    unfoldings and $j$-th mode products. 
    \begin{lemma} \label{lem:multilinear-rank}
      Let $Y = X \times_i A$. Then, 
      \begin{enumerate}
        \item[$(i)$] $Y^{(i)} = A X^{(i)}$;
        \item[$(ii)$] 
          $y = (\underbrace{I \otimes \ldots \otimes I}_{i-1\text{ terms}} 
          \otimes A \otimes 
          \underbrace{I \otimes \ldots \otimes I}_{d-i-1\text{ terms}}) x$;
        \item[$(iii)$] the multilinear rank of $Y$ is bounded by 
          $r = (r_1, \ldots, r_d)$, the multilinear rank of $X$;
        \item[$(iv)$] for any other tensor $Z$ with multilinear rank 
          $(s_1, \ldots, s_d)$, the multilinear rank of 
          $X + Z$ is bounded by 
          $(r_1 + s_1, \ldots, r_d + s_d)$. 
      \end{enumerate}
      where as usual $x = \vect(X)$, $y = \vect(Y)$, and 
      the Kronecker product in $(ii)$ has the only matrix different from the 
      identity in position $i$. 
    \end{lemma}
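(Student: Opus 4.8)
The plan is to establish the four claims by unwinding the definitions of the mode-$i$ product and of the mode unfoldings, in the order $(i)$, $(ii)$, $(iii)$, $(iv)$, since each step feeds the next. For $(i)$ I would simply compare entries: by definition the $(j,\mathbf m)$ entry of $Y^{(i)}$ is $Y_{i_1,\dots,i_d}$ with the $i$-th index set to $j$ and the remaining indices packed into the multi-index $\mathbf m$, and the definition of $X \times_i A$ rewrites this as $\sum_k A_{jk} X_{\dots,k,\dots} = \sum_k A_{jk} (X^{(i)})_{k,\mathbf m} = (A X^{(i)})_{j,\mathbf m}$. Hence $Y^{(i)} = A X^{(i)}$; this is a one-line index check, the only care being that the packing of $\mathbf m$ into a single column index is fixed consistently on both sides.

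For $(ii)$ I would reduce to $(i)$ together with the observation that $\vect(\cdot)$ and the mode-$i$ unfolding differ only by a fixed permutation matrix $P_i$ depending on $n_1,\dots,n_d$ and $i$: in the column-stacking convention, $x = P_i \vect\!\big(X^{(i)}\big)$ and likewise $y = P_i \vect\!\big(Y^{(i)}\big)$. Applying $(i)$ and the standard identity $\vect(AM) = (I \otimes A)\vect(M)$ gives $\vect\!\big(Y^{(i)}\big) = (I \otimes A)\vect\!\big(X^{(i)}\big)$, and conjugating by $P_i$ turns $I \otimes A$ into the Kronecker product with $A$ sitting in the $i$-th slot. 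Pinning down $P_i$ and verifying that it conjugates $I \otimes A$ into the stated Kronecker product is the one mildly delicate bookkeeping step of the lemma.

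For $(iii)$, the bound in mode $i$ is immediate from $(i)$, since $\mathrm{rank}(Y^{(i)}) = \mathrm{rank}(A X^{(i)}) \le \mathrm{rank}(X^{(i)}) = r_i$. For a mode $j \ne i$ I would use that a mode-$i$ product acts on the mode-$j$ unfolding by right multiplication: from $(ii)$ and the unfolding–vectorization correspondence one obtains $Y^{(j)} = X^{(j)} Q^T$ with $Q$ a Kronecker product built from $A$ and identity blocks, so $\mathrm{rank}(Y^{(j)}) \le \mathrm{rank}(X^{(j)}) = r_j$. Together these inequalities say exactly that the multilinear rank of $Y$ is componentwise bounded by $r = (r_1,\dots,r_d)$. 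For $(iv)$ I would use linearity of unfolding, $(X+Z)^{(j)} = X^{(j)} + Z^{(j)}$, together with subadditivity of matrix rank, $\mathrm{rank}(X^{(j)} + Z^{(j)}) \le \mathrm{rank}(X^{(j)}) + \mathrm{rank}(Z^{(j)}) = r_j + s_j$, for each $j = 1,\dots,d$.

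The main obstacle is the index-and-permutation bookkeeping underlying $(ii)$ — choosing the convention for $\vect$ and the unfoldings so that the permutation $P_i$ indeed conjugates $I \otimes A$ to the Kronecker product with $A$ in position $i$ — and its reuse in $(iii)$ for the modes $j \ne i$. Everything else is a direct consequence of the definitions, linearity of the unfolding map, and the elementary rank inequalities $\mathrm{rank}(AM) \le \mathrm{rank}(M)$ and $\mathrm{rank}(M+N) \le \mathrm{rank}(M) + \mathrm{rank}(N)$.
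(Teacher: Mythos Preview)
The paper does not actually include a proof of this lemma: it is stated as a collection of standard facts about mode products and unfoldings, and the text moves directly to discussing its consequences. Your proposal is correct and supplies exactly the kind of routine verification the paper omits; the index bookkeeping you flag for $(ii)$ is indeed the only point requiring care, and your reduction of $(iii)$ for modes $j\ne i$ to a right multiplication on $X^{(j)}$ is the clean way to handle it.
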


    A direct consequence of the second representation of the 
    $i$-th mode product is that, for any choice of matrices $A, B$ and 
    $i \neq j$, we have 
    $(X \times_j B ) \times_i A  = (X \times_i A) \times_j B$. 
    Hence, we avoid unnecessary brackets when combining 
    several $j$-mode products writing 
    $
      X \times_{j_1} A_{j_1} \ldots \times_{j_{\ell}} A_{j_{\ell}}
    $.

    The (quasi)-optimal multilinear rank $r = (r_1, \ldots, r_d)$
    approximant to a generic tensor $X$ can be effectively 
    computed by repeatedly truncating the $i$-th mode unfoldings;
    this procedure is usually known as \emph{multilinear SVD}, 
    or \emph{high-order SVD} (HOSVD) \cite{de2000multilinear}.

    If a tensor $X$ has a low multilinear rank, it can be 
    efficiently expressed through a \emph{Tucker decomposition}; 
    with our current notation this can be written as follows:
    \[
      X = B \times_1 U_1 \times_2 U_2 \ldots \times_d U_d, 
    \]
    where $B \in \mathbb C^{r_1 \times \ldots \times r_d}$, 
    and $U_j$ are $n_j \times r_j$ matrices with orthogonal 
    columns. When 
    the multilinear ranks are smaller than the dimensions 
    $n_1, \ldots, n_d$, this representation allows to compress 
    the data. 
    
    We remark that for very large $d$, this representation can 
    still be too expensive: even if the $r_i$ are small, they are 
    still multiplied together, and making the simplifying assumption 
    that $r := r_1 = \ldots = r_d$ the storage requirements for this 
    decomposition are $\mathcal O(r^d + (n_1 + \ldots + n_d)r)$ memory --- 
    which is exponential with respect to $d$. So, even if 
    when $r \ll n_i$ this format allows to save a large amount 
    of memory, working with general high-dimensional problems may 
    remain unfeasible. 
    
    To overcome this drawback, several other tensor formats 
    have been introduced: \emph{Tensor Trains} \cite{oseledets2011tensor}
    (also called \emph{Matrix Product States}, or MPS \cite{mps}), 
    \emph{Hierarchical Tucker Decompositions} \cite{hierarchicaltucker}, 
    and more general \emph{Tensor Networks} \cite{tensornetworks}. 

    In this work, we focus on Tensor Trains, and we briefly recap 
    the properties that are relevant for our results. The TT format 
    requires another definition of rank (the TT-ranks), which still 
    requires the introduction of appropriate matricization. 
    We expect similar result to hold for other tensor 
    formats whose ranks can be described by means of 
    matricizations (such as Hierarchical Tucker, or Tensor Networks \cite{hierarchicaltucker}). 

    Given a $d$-dimensional tensor $X$, we define the matrices 
    $X^{\{i\}}$ obtained by grouping the first $i$ indices 
    together as row indices, and the remaining ones as columns 
    indices. The vector $r = (r_1, \ldots, r_{d-1})$, where 
    $r_i$ is the rank of $X^{\{i\}}$, is called the 
    \emph{Tensor-Train rank of $X$} (or TT-rank). 

    A tensor with TT-rank smaller than $(r_1, \ldots, r_{d-1})$ 
    can be decomposed as follows \cite{oseledets2011tensor}: 
    \begin{equation} \label{eq:tt-expr}
      X_{i_1, \ldots, i_d} := 
        \sum_{s_1, \ldots, s_{d-1}}
          C^{(1)}_{i_1 s_1}
          C^{(2)}_{s_1 i_2 s_2}
          \ldots 
          C^{(d-1)}_{s_{d-2} i_{d-1} s_{d-1}}
          C^{(d)}_{s_{d-1} i_d}, 
    \end{equation}
    where $C^{(j)}$ are called \emph{carriages} and can be either 
    matrices ($j = 1, d$) or three-dimensional tensors ($1 < j < d$).
    It is readily apparent that this representation breaks the 
    so-called \emph{curse of dimensionality}: a tensor with low 
    (TT-)ranks can be stored with a number of parameters only 
    polynomial in $d$. 

    From \Cref{eq:tt-expr} we note that the operation 
    $X \times_j A$ can be efficiently evaluated in the TT-format, 
    as that only requires to modify $C^{(j)}$ by computing 
    $C^{(j)} \times_2 A$ (with the only exception $j = 1$, 
    where the required operation is $C^{(1)} \times_1 A$). Hence, 
    we may state a Tensor Train analogue of the last 
    item in \Cref{lem:multilinear-rank}.

    \begin{lemma} \label{lem:tt-rank}
      Let $Y = X \times_i A$. Then, 
      \begin{enumerate}
        \item the Tensor Train rank of $Y$ is bounded by 
        $r = (r_1, \ldots, r_{d-1})$, the 
        Tensor Train rank of $X$, 
        \item for any other tensor $Z$
          with Tensor-Train rank $(s_1, \ldots, s_{d-1})$, 
          the Tensor Train rank of $Y + Z$ is bounded by 
          $(r_1 + s_1, \ldots, r_{d-1} + s_{d-1})$. 
      \end{enumerate}
    \end{lemma}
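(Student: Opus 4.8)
The plan is to reduce both claims to statements about the matricizations $X^{\{k\}}$, $k = 1, \ldots, d-1$, whose ranks define the TT-rank, and then exploit the carriage representation \eqref{eq:tt-expr}. For the first item the quickest route is the observation already recorded in the discussion preceding the lemma: the mode-$i$ product $X \times_i A$ is realized by replacing the single carriage $C^{(i)}$ with $C^{(i)} \times_2 A$ (or $C^{(1)} \times_1 A$ in the boundary case $i = 1$), while leaving every other carriage untouched. Consequently the summation indices $s_1, \ldots, s_{d-1}$ in \eqref{eq:tt-expr} still range over exactly $r_1, \ldots, r_{d-1}$ values, so a valid TT-representation of $Y$ with these ranks exists, and the TT-rank of $Y$ is bounded by $(r_1, \ldots, r_{d-1})$.

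For completeness I would also sketch the equivalent matricization argument, since it makes the bound transparent and mirrors item $(ii)$ of \Cref{lem:multilinear-rank}: grouping the first $k$ indices of $Y$ as rows, the Kronecker-structured operator $I^{\otimes(i-1)} \otimes A \otimes I^{\otimes(d-i)}$ acts on the row space of the matricization when $i \le k$, giving $Y^{\{k\}} = (I^{\otimes(i-1)} \otimes A \otimes I^{\otimes(k-i)})\, X^{\{k\}}$, and on the column space when $i > k$, giving $Y^{\{k\}} = X^{\{k\}}\, (I^{\otimes(i-k-1)} \otimes A \otimes I^{\otimes(d-i)})^T$. In either case $\mathrm{rank}(Y^{\{k\}}) \le \mathrm{rank}(X^{\{k\}}) = r_k$ for every $k$, which is the assertion.

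For the second item, the cleanest argument is again through matricizations: reshaping is linear, so $(Y + Z)^{\{k\}} = Y^{\{k\}} + Z^{\{k\}}$, and subadditivity of the matrix rank together with the first item gives $\mathrm{rank}((Y+Z)^{\{k\}}) \le \mathrm{rank}(Y^{\{k\}}) + \mathrm{rank}(Z^{\{k\}}) \le r_k + s_k$ for each $k = 1, \ldots, d-1$, which is precisely the claimed componentwise bound. One can alternatively exhibit an explicit TT-decomposition of $Y + Z$ by block-concatenating the carriages of the two summands --- block-diagonally for the internal carriages $C^{(j)}$ with $1 < j < d$, and by horizontal (resp. vertical) stacking for the boundary carriages $C^{(1)}$ (resp. $C^{(d)}$) --- which yields carriages whose coupling indices range over $r_j + s_j$ values.

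I expect the only delicate point to be the bookkeeping in the matricization version of the first item: one must state carefully which Kronecker-structured factor multiplies $X^{\{k\}}$ on which side as the position $i$ crosses the split index $k$, and check the boundary cases $i = 1$ and $i = d$. Since the carriage-based proof of the first item avoids this case analysis entirely and the rank estimates are then immediate, I would present the carriage argument as the main proof and mention the matricization identities only briefly.
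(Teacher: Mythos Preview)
Your proof is correct and follows the same approach as the paper for the first item: the paper's proof simply invokes the carriage-replacement observation from the preceding discussion, exactly as you do, noting that the rank-determining dimensions of the carriages are unchanged. For the second item the paper does not give a proof at all but refers the reader to \cite[Section~4.1]{oseledets2011tensor}; your block-concatenation construction is precisely the argument found there, and your matricization-plus-subadditivity argument is a valid and slightly more direct alternative, so on this point you are more complete than the paper itself.
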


    \begin{proof}
      The first claim follows by the current discussion, since the 
      dimensions of the updated carriage $C^{(j)} \times_2 A$ 
      involving the ranks are not modified. 
      We refer the reader to \cite[Section~4.1]{oseledets2011tensor}
      for a proof of the second one. 
    \end{proof}

    \subsection{Low-rank approximation in the symmetric positive definite case}

    We consider the case where the matrices $A_i$ defining 
    $\mathcal A$ are symmetric positive definite. On one hand, 
    this greatly simplifies the derivation of the results 
    thanks to the normality and the fact that the spectrum of 
    $\mathcal A$ is real. On the other hand, the non-negativity 
    of the eigenvalues is a common assumption when taking the 
    negative fractional power of an operator, and therefore 
    this assumption is not particularly restrictive in practice. 

    We will make repeated use of the following fact. 

    \begin{lemma} \label{lem:exp-break}
      Let $A_i, i = 1, \ldots, d$, be matrices 
      of size $n_i \times n_i$, and $X$ any
      $d$-dimensional tensor of size 
      $n_1 \times \ldots \times n_d$. Then, 
      \[
        \exp\left(
          \bigoplus_{i = 1}^d A_i 
        \right) \vect(X) = 
        \vect( X \times_1 \exp(A_1)\times_2 \ldots 
        \times_d \exp(A_d)). 
      \]
    \end{lemma}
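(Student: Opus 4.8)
The plan is to reduce the statement to two basic facts about the matrix exponential of a Kronecker sum: that exponentials of commuting matrices multiply, and that the Kronecker product of exponentials corresponds exactly to a sequence of $j$-mode products at the level of tensors. I would start from the observation, already recorded in \Cref{sec:expsums}, that the $d$ summands in $\bigoplus_{i=1}^d A_i$ pairwise commute (each is an identity in all but one tensor factor, and in that factor one of the two is the identity). Consequently $\exp\!\left(\bigoplus_{i=1}^d A_i\right)$ factors as the ordinary product $\prod_{i=1}^d \exp(I\otimes\cdots\otimes A_i\otimes\cdots\otimes I)$, and since $\exp$ applied to $I\otimes\cdots\otimes A_i\otimes\cdots\otimes I$ equals $I\otimes\cdots\otimes\exp(A_i)\otimes\cdots\otimes I$ (the exponential series only touches the non-identity slot), we get
\[
  \exp\!\left(\bigoplus_{i=1}^d A_i\right)
  = \bigotimes_{i=1}^d \exp(A_i).
\]

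Next I would translate the action of this Kronecker product on $\vect(X)$ into tensor language. By item $(ii)$ of \Cref{lem:multilinear-rank}, multiplying $x = \vect(X)$ by a Kronecker product with a single non-identity factor $\exp(A_i)$ in slot $i$ is exactly $\vect(X \times_i \exp(A_i))$. Since $\bigotimes_{i=1}^d \exp(A_i) = \prod_{i=1}^d \big(I\otimes\cdots\otimes\exp(A_i)\otimes\cdots\otimes I\big)$, I would apply this identity one factor at a time, using also the commutativity of $j$-mode products in distinct modes (remarked right after \Cref{lem:multilinear-rank}) so that the order in which the factors are applied is immaterial. Iterating gives
\[
  \left(\bigotimes_{i=1}^d \exp(A_i)\right)\vect(X)
  = \vect\big(X \times_1 \exp(A_1) \times_2 \cdots \times_d \exp(A_d)\big),
\]
which combined with the factorization above is precisely the claimed identity.

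I do not expect a serious obstacle here; the proof is essentially bookkeeping. The one point needing a line of care is the elementary claim $\exp(I\otimes\cdots\otimes A_i\otimes\cdots\otimes I) = I\otimes\cdots\otimes\exp(A_i)\otimes\cdots\otimes I$, which follows because $(I\otimes\cdots\otimes A_i\otimes\cdots\otimes I)^k = I\otimes\cdots\otimes A_i^k\otimes\cdots\otimes I$ and the Kronecker product is bilinear and continuous, so it commutes with the convergent power series defining $\exp$. Alternatively, one could phrase the whole argument at the tensor level from the start: define the linear operator $\mathcal L_i(X) := X\times_i A_i$, note that the $\mathcal L_i$ pairwise commute and that $\bigoplus_i A_i$ acts on $\vect(X)$ as $\sum_i \mathcal L_i$, and invoke that the exponential of a sum of commuting operators is the product of the exponentials, together with $\exp(\mathcal L_i)(X) = X\times_i\exp(A_i)$ (again from the power-series expansion and $\mathcal L_i^k(X) = X\times_i A_i^k$). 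Either route is short; I would present the Kronecker-product version since the ingredients $(i)$–$(ii)$ of \Cref{lem:multilinear-rank} are already available.
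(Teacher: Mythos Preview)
Your proposal is correct and follows essentially the same approach as the paper: both rely on the pairwise commutativity of the summands in the Kronecker sum to factor the exponential, and then translate the resulting Kronecker product into mode products via \Cref{lem:multilinear-rank}$(ii)$. Your write-up is in fact more detailed than the paper's one-line proof, which simply cites the commutativity and the property $e^{A+B}=e^Ae^B$ for commuting $A,B$ without spelling out the tensor bookkeeping.
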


    \begin{proof}
      The proof follows noting that the addends defining 
      $\bigoplus_{i = 1}^d A_i$ commute, and using the 
      property that if $AB = BA$ then $e^{AB} = e^{BA} = e^{A}e^B$.
    \end{proof}

    \begin{theorem} \label{thm:lrapprox}
        Let $\mathcal A = \bigoplus_{i = 1}^d A_i$ be invertible, 
        with $A_i$ symmetric 
        positive definite (possibly unbounded) operators. Let 
        $x = \vect(X)$, $c = \vect(C)$, 
        and $x = \mathcal A^{-\alpha} c$. Then, for any $N \in \mathbb N$ 
        there exists an approximant $X_N$ to $X$ such that
        $\norm{X - X_N}_F \sim \mathcal O(e^{-\frac{\pi}{2} \sqrt{\alpha N}})$ 
        and:
        \begin{itemize}
            \item if $C$ has CP rank bounded by $r$, then 
              $X_N$ has CP rank bounded by $Nr$. 
            \item if $C$ has multilinear rank bounded 
              by $(r_1, \ldots, r_d)$, then 
              the multilinear rank of $X_N$ 
              is bounded by $(Nr_1, \ldots,  Nr_d)$. 
            \item if $C$ has TT-ranks bounded by $(r_1, \ldots, r_{d-1})$
              then the approximation $X_N$ has TT-ranks bounded by 
              $(Nr_1, \ldots, Nr_{d-1})$. 
        \end{itemize}
  \end{theorem}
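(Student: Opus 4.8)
The plan is to lift the scalar exponential sum approximation of \Cref{thm:exp-bound} to the operator $\mathcal A$ via spectral calculus, use \Cref{lem:exp-break} to turn each exponential into a chain of mode products, and then read off the rank bounds from \Cref{lem:multilinear-rank} and \Cref{lem:tt-rank}. A preliminary reduction is needed because \Cref{thm:exp-bound} only controls the approximation on $[1,\infty)$: since each $A_i$ is symmetric positive definite and $\mathcal A$ is invertible, $\mu := \inf\mathrm{spec}(\mathcal A) > 0$ (it equals $\sum_i \inf\mathrm{spec}(A_i)$), so writing $\tilde A_i := A_i/\mu$ we get $\tilde{\mathcal A} := \bigoplus_i \tilde A_i = \mathcal A/\mu$, again a Kronecker sum of SPD operators, with $\inf\mathrm{spec}(\tilde{\mathcal A}) = 1$ and $\mathcal A^{-\alpha} = \mu^{-\alpha}\tilde{\mathcal A}^{-\alpha}$. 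Both $\tilde{\mathcal A}^{-\alpha}$ and, for $\beta > 0$, $e^{-\beta\tilde{\mathcal A}}$ are bounded, so this step is harmless also in the possibly unbounded case.

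\emph{Approximant and error.} I fix $d = \frac{\pi\alpha}{8}$ (the largest admissible value, giving the fastest rate) and, for a prescribed number of terms $N$, choose $h, N_-, N_+$ as in \Cref{thm:exp-bound} with $N_- + N_+ + 1 \le N$; this produces weights $\alpha_j$ and positive nodes $\beta_j$ with
\[
  \sup_{\xi \geq 1}\Bigl| \xi^{-\alpha} - \textstyle\sum_j \alpha_j e^{-\beta_j \xi} \Bigr| \;\leq\; \eta_N, \qquad \eta_N \sim \mathcal O\!\bigl(e^{-\frac{\pi}{2}\sqrt{\alpha N}}\bigr),
\]
the asymptotics being the one recorded at the end of \Cref{thm:exp-bound} (the dominant term is $N_- \sim h^{-2}$, and $\sqrt{2\pi d N} = \frac{\pi}{2}\sqrt{\alpha N}$ for this $d$). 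Applying \Cref{lem:exp-break} termwise I set
\[
  X_N := \mu^{-\alpha}\sum_{j=-N_-}^{N_+} \alpha_j \; C \times_1 e^{-\beta_j \tilde A_1} \times_2 \cdots \times_d e^{-\beta_j \tilde A_d},
\]
so that $\vect(X_N) = \mu^{-\alpha}\bigl(\sum_j \alpha_j e^{-\beta_j \tilde{\mathcal A}}\bigr)\,c$. Since $\tilde{\mathcal A}$ is self-adjoint with $\mathrm{spec}(\tilde{\mathcal A}) \subseteq [1,\infty)$, the spectral theorem gives $\bigl\|\tilde{\mathcal A}^{-\alpha} - \sum_j \alpha_j e^{-\beta_j\tilde{\mathcal A}}\bigr\|_2 \leq \eta_N$, whence
\[
  \norm{X - X_N}_F = \norm{x - \vect(X_N)}_2 \leq \mu^{-\alpha}\,\eta_N\,\norm{c}_2 \sim \mathcal O\!\bigl(e^{-\frac{\pi}{2}\sqrt{\alpha N}}\bigr),
\]
the $N$-independent factor $\mu^{-\alpha}\norm{c}_2$ and the $\mathrm{poly}(\log)$ hidden in $\eta_N$ being absorbed into the $\mathcal O$.

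\emph{Rank bounds.} Each summand of $X_N$ is obtained from $C$ by a chain of mode products $\times_i e^{-\beta_j \tilde A_i}$. By \Cref{lem:multilinear-rank}$(iii)$ this does not increase the multilinear rank, and by \Cref{lem:tt-rank}(1) it does not increase the TT-rank; in the CP case a rank-one term $u_1\otimes\cdots\otimes u_d$ is mapped to $(e^{-\beta_j\tilde A_1}u_1)\otimes\cdots\otimes(e^{-\beta_j\tilde A_d}u_d)$, again rank one, so the CP rank is preserved. Summing at most $N$ such terms and invoking \Cref{lem:multilinear-rank}$(iv)$, \Cref{lem:tt-rank}(2), and subadditivity of CP rank (padding with zero-weight terms if needed to realise exactly $N$) yields the claimed bounds $Nr$, $(Nr_1,\dots,Nr_d)$, and $(Nr_1,\dots,Nr_{d-1})$.

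I expect the only genuinely delicate point to be that the estimate in \Cref{thm:exp-bound} is \emph{uniform} over the whole half-line $[1,\infty)$: this uniformity — together with the rescaling to $\inf\mathrm{spec}(\tilde{\mathcal A}) = 1$ and the normality of the $A_i$ — is exactly what legitimises the passage from the scalar bound to the operator-norm (hence Frobenius-norm) bound, including for the possibly unbounded operators. Everything else is bookkeeping of constants and of ranks under mode products and sums.
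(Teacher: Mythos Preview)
Your proof is correct and follows essentially the same route as the paper: rescale $\mathcal A$ by its smallest eigenvalue so the spectrum lies in $[1,\infty)$, apply the exponential sum of \Cref{thm:exp-bound} with $d=\pi\alpha/8$ via spectral calculus, use \Cref{lem:exp-break} to split each term into mode products, and read off the rank bounds from \Cref{lem:multilinear-rank} and \Cref{lem:tt-rank}. If anything you are slightly more explicit than the paper about the unbounded case and about matching $N_-+N_++1$ to a prescribed $N$.
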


  \begin{proof}
    Let $f_N(\xi)$ be the exponential 
    sum approximation to $\xi^{-\alpha}$ 
    with $N = N_- + N_+ + 1$ terms 
    of the form 
    \[
      \xi^{-\alpha} \approx f_N(\xi) = E_{N_-, N_+}(g, h) = 
        \sum_{j = -N_-}^{N_+} \alpha_j e^{-\beta_j \xi}, \qquad 
        \xi \geq 1. 
    \]
    obtained from Theorem~\ref{thm:exp-bound}. Then, we define 
    the approximation $X_N$ as follows: 
    \[
      X_N = \lambda_{\min}^{-\alpha} 
        \sum_{i = -N_-}^{N_+} \alpha_{i,j} C 
        \times_1 e^{-\beta_{i,j} \lambda_{\min}^{-1} A_1} \ldots 
        \times_1 e^{-\beta_{i,j} \lambda_{\min}^{-1} A_d}, 
    \]
    where $\lambda_{\min}$ is the smallest eigenvalue of $\mathcal A$ and $N = N_- + N_+ + 1$, the amount of terms in the sum. 
    Using the definition of CP rank, 
    and \Cref{lem:multilinear-rank}, \Cref{lem:tt-rank}, 
    we make the following observations:
    \begin{itemize}
      \item If $C$ has CP rank bounded by $r$, then $X_N$ has 
        CP rank bounded by $Nr$. 
      \item If the multilinear rank of $C$ is component-wise 
        bounded by $r = (r_1, \ldots, r_d)$, then the multilinear 
        rank of $X_N$ can be controlled with 
        $(Nr_1, \ldots, Nr_d)$  ---  thanks to \Cref{lem:multilinear-rank}. 
      \item If the TT-rank of $C$ is bounded by $r = (r_1, \ldots, r_{d-1})$
        then the TT-rank of $X_N$ is bounded by 
        $(Nr_1, \ldots, Nr_{d-1})$ ---  thanks to \Cref{lem:tt-rank}. 
    \end{itemize}
    We now show that the approximation $X_N$ satisfies the sought bound. 
    Using the representation 
    $\vect(X) = x = \lambda_{\min}^{-\alpha} f_N(\lambda_{\min}^{-1}\mathcal A) c$, 
    we obtain 
    \begin{align*}
      \norm{X - X_N}_F &= 
      \norm{\mathcal A^{-\alpha} c - \lambda_{\min}^{-\alpha} f_N(\lambda_{\min}^{-1} \mathcal A) c}_2 \\
      &\leq \lambda_{\min}^{-\alpha} 
        \norm{(\lambda_{\min}^{-1} \mathcal A)^{-\alpha} - f_N(\lambda_{\min}^{-1} \mathcal A)}_2 \cdot \norm{c}_2 \\
      &\leq \lambda_{\min}^{-\alpha} 
        C e^{-\sqrt{2\pi d N}} \norm{c}_2, 
    \end{align*}
    where $C$ is the constant from the 
    $\mathcal O(e^{-\sqrt{2\pi d N}})$ term in 
    \Cref{thm:exp-bound}, and 
    we have used that $\lambda_{\min}^{-1} \mathcal A$ 
    is normal and has spectrum contained in $[1, +\infty)$. We can choose 
    $d = \frac{\pi \alpha}{8}$ in \Cref{thm:exp-bound}, and obtain the 
    sought result. By reversing the dependency between 
    $N$ and $\epsilon$, we eventually obtain 
the asymptotic $N \sim \mathcal O\left(
  {\log^2(\frac{1}{\epsilon})}/{2\pi d}
\right)$. 
  \end{proof}

  \subsection{Connection with rational approximations}

  In the matrix case ($d = 2$) bounds on the rank of the solution can be obtained by linking 
  the problem with rational approximation on the complex plane. In the special case 
  $\alpha = 1$, this links to the well-known properties of low-rank Sylvester solver 
  such as ADI, that  allows to build (explicit) approximants to the 
  solution $X$ of $AX + XB = C$ in the form 
  \[
    X - X_N = r(A) X r(-B)^{-1}, \qquad 
    r(z) = \frac{p(z)}{q(z)}, 
    \qquad 
    \mathrm{rank}(X_N) \leq N \cdot \mathrm{rank}(C), 
  \]
  where $p(z)$ and $q(z)$ are polynomials of degree at most $N+1$. Using the above 
  expression, and considering polynomials which are small on the spectrum of $A$ 
  and large on the one of $B$, allows to build low-rank approximant to $X$. 
  The problem of finding such rational functions is called a \emph{Zolotarev problem}, 
  and the solution is known explicitly 
  in terms of elliptic functions \cite{zolotarev1877application}. 

  When $\alpha < 1$ the situation is less straightforward, because an equation 
  with separable coefficients cannot be written. However, similar results can be 
  derived by using a Cauchy-Stieltjes formulation for $z^{-\alpha}$: 
  \[
    z^{-\alpha} = \frac{\sin(\alpha \pi)}{\pi} \int_{0}^{\infty} \frac{t^{-\alpha}}{t + z}\ dz. 
  \]
  This representation yields a formula for the solution $x = \vec(X)$ 
  to $\mathcal A^{\alpha} x = c$ in terms of integrals of a parameter dependent 
  family of (shifted) Sylvester equations, and this can be used to determine 
  a low-dimensional subspace where a good approximation for the solution can be 
  found. This has been exploited in \cite{massei2021rational,benzi2017approximation} 
  for constructing 
  rational Krylov methods for the case $d = 2$ and $\alpha < 1$, which predict 
  an exponential decay in the singular values (in contrast with the square root 
  exponential bound from \Cref{thm:exp-bound}). 

  These bounds depend on the condition number of $\mathcal A$ (although only logarithmically), 
  and therefore cannot be applied to unbounded operators. Since multilinear and 
  tensor-train ranks are defined by matricization, we think that a similar 
  idea may be exploited to derive bounds for these special cases for $d > 2$, 
  although to the best of our knowledge this has not been worked out explicitly
  at the time of writing. 

  A good indication in this direction is given by the numerical experiments, 
  which show a better approximability with respect to these formats than the 
  one predicted by \Cref{thm:lrapprox}. There is instead little hope to apply 
  such techniques to the CP case. 

  It is worth mentioning that the connection with rational approximant 
  of $z^{-\alpha}$ have been exploited in many works 
  \cite{aceto2017rational,aceto2019rational,aceto2022exponentially,harizanov2020survey,harizanov2020analysis,bonito2015numerical}
   for designing efficient 
  solvers for fractional differential equation. Since it relies on the 
  solution of shifted linear systems, it gives effective methods for all cases
  where the matrix is sparse. Our approach using matrix exponentials is instead 
  more practical when aiming at exploiting the Kronecker structure in the 
  operator. 

  \section{Numerical experiments} \label{sec:numexps}
  In this last section we report a few numerical experiments that further validate 
  our bounds, showing in which cases they are most descriptive. In addition, we show 
  that the exponential sum expansions yields an effective solver for problems with 
  a low-rank right hand side. 

  All numerical experiments have been run on an AMD
  Ryzen 7 3700x CPU with 
  32GB of RAM, running MATLAB 2022a with the bundled
  Intel MKL BLAS. The code for the experiments can 
  be found at 
  \texttt{https://github.com/numpi/fractional-expsums}. 

  \subsection{3D Fractional Poisson equation}
  \label{sec:fractionalpoisson}

  As a first example, we consider the solution of the
  fractional Poisson equation 
  on the 3-dimensional cube $[0, 1]^3$: 
  \begin{equation} \label{eq:poisson3d}
    \begin{cases}
      (-\Delta)^{\alpha} u = f & \text{in } \Omega \\
      u \equiv 0 & \text{on } \partial \Omega 
    \end{cases}, \qquad 
    \Omega = [0, 1]^3. 
  \end{equation}
  We discretize the domain with a uniformly spaced grid with 
  $128$ points in each direction, and the operator 
  $\Delta$ by finite differences, which yields the linear system 
  \[
    \mathcal A = \bigoplus_{i = 1}^3 A_i, \qquad 
    A_i = \frac{1}{h^2} \begin{bmatrix}
      2 & -1 \\
      -1 & 2 & \ddots \\
      & \ddots & \ddots & 1 \\
      & & -1 & 2 \\
    \end{bmatrix}. 
  \]
  where $h = \frac{1}{n-1}$ is the distance between the grid points. 
  We approximate the solution of \eqref{eq:poisson3d} by computing 
  $
    \mathbf u = \mathcal A^{-\alpha} \mathbf f$
  where $\mathbf f$ is the vector containing the evaluations 
  of $f(x,y,z) = 1 / (1 + x + y + z)$ at the internal points of the discretization grid. 
  For this example, we choose $\alpha = 0.4$. 

  In Figure~\ref{fig:poisson3d} we report the quality of the approximation
  obtained for $\mathbf u$ by using the exponential sum with 
  $N$ terms described in Theorem~\ref{thm:exp-bound}. The exact 
  solution is computed by diagonalizing $A_i$, which is 
  feasible and accurate because they are symmetric and of
  moderate sizes. 

  \begin{figure}
    \centering
    \begin{tikzpicture}
      \begin{semilogyaxis}[xlabel = $N$, ylabel = {Relative error},
        width = .65\linewidth, height = .35\textheight
      ]
      \addplot table[y index = 2] {exp_tensor_d=3_n=128.dat};
        \addplot table {exp_tensor_d=3_n=128.dat};
        \legend{$\mathcal O(e^{-\sqrt{2\pi d N}})$, Relative error}
      \end{semilogyaxis}
    \end{tikzpicture}
    \caption{Relative error on the approximation of the solution for 
      the discretization of the problem in 
      \eqref{eq:poisson3d} using $128^3$ points, with 
      $\alpha = 0.4$. 
      The error is computed using the Frobenius norm, and 
      the approximation is 
      computed using exponential sums with $N$ terms, as 
      in Theorem~\ref{thm:exp-bound}. }
      \label{fig:poisson3d} 
  \end{figure}
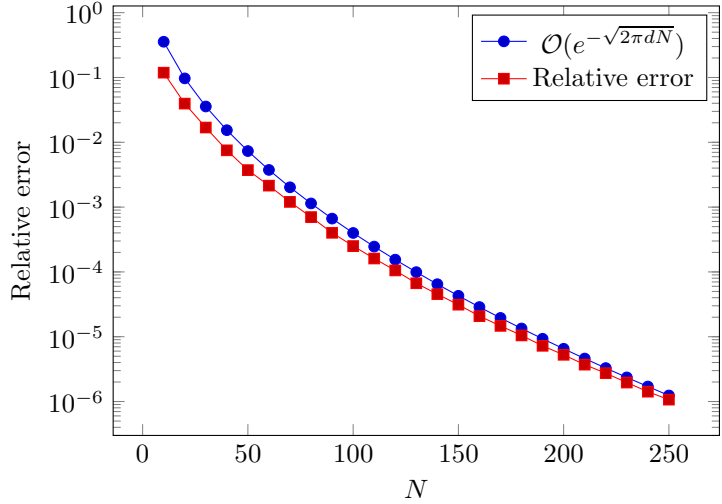

  We now consider the same example with right hand side 
  $f(x,y,z) = \sin(x)\cos(y)e^z$. Since this function is separable, 
  the corresponding vector $\mathbf f$ is the vectorization of a 
  rank $1$ tensor. This allows to directly build a low-rank 
  approximation of the solution by the expansion: 
  \begin{equation} \label{eq:lowrank-cpd}
    \mathcal A^{-\alpha} \mathbf x \approx 
      \sum_{j = 1}^N \alpha_j 
        e^{-\beta_j \mathcal A} \mathbf f, \qquad 
        \mathbf f = \vect(F), \qquad 
        e^{-\beta_j \mathcal A} \mathbf f = 
        \vect(
          F \times_1 e^{-\beta_1 A_1} \ldots \times_d e^{-\beta_d A_d}
          )
  \end{equation}
  Under these hypotheses, the cost of evaluating the inverse 
  fractional power is dominated by computing the 
  matrix exponentials, and requires $\mathcal O(dn^3 + Ndn^2)$ flops 
  for an $d$-dimensional tensor with all modes of length $n$. In 
  contrast, evaluating the fractional power by diagonalization 
  requires $\mathcal O(dn^4)$ flops. In Table~\ref{tab:lr} we 
  compare the cost of these two algorithms, using a different 
  length of the exponential sum approximation to $z^{-\alpha}$. 

  \begin{table}
    \centering
    \pgfplotstabletypeset[clear infinite,
      every head row/.style={after row=\hline},
      empty cells with={\ensuremath{-}},
      columns={0,1,4,5,6,7,8,9},
      columns/0/.style={column name={$n$}, column type/.add={}{|}},
      columns/1/.style={column name={$t_{\text{dense}}$}},
      columns/2/.style={column name={$t_{30}$}},
      columns/3/.style={column name={$res_{30}$}},
      columns/4/.style={column name={$t_{100}$}},
      columns/5/.style={column name={$res_{100}$}},
      columns/6/.style={column name={$t_{200}$}},
      columns/7/.style={column name={$res_{200}$}},
      columns/8/.style={column name={$t_{350}$}},
      columns/9/.style={column name={$res_{350}$}},
    ]{exp_tensor_lr_d=3.dat}
    \caption{Time and accuracy of the low-rank 
      approximation to $\mathcal A^{-\alpha} \mathbf f$ obtained by 
      the exponential sums of length $N = 100, 200, 350$, and 
      runtime of the dense evaluation based on diagonalization, 
      for $d = 3$.}
    \label{tab:lr}
  \end{table}

  We note that in this case it is not practical to compute 
  the dense solution for large dimensions, since the memory 
  required is $\mathcal O(n^d)$; the low-rank approximation obtained 
  through \eqref{eq:lowrank-cpd} only requires 
  $\mathcal O(nd)$ storage. For this reason, we only report 
  the results for the dense case and the 
  accuracy up to dimension $n = 512$ in Table~\ref{tab:lr}. 

  We remark that since the convergence bound is uniform over 
  $[1, +\infty]$ the accuracy does not degrade as $n \to \infty$, 
  even if the largest eigenvalues of the discretized Laplacian 
  converge to infinity; this is necessary the case, 
  since the underlying continuous operator 
  is unbounded. 

  If $n$ grows and the $A_i$ are structured, it can be convenient 
  to exploit strategies to directly compute 
  $e^{-\beta_j A_i}v$ instead of building the entire 
  matrix exponential $e^{-\beta_j A_i}$, such as methods based 
  on Krylov subspaces (see \cite{higham2008functions} and the references therein) or 
  on truncated Taylor expansions \cite{al2011computing}. 

  \subsection{Low-rank approximability in tensor formats}

  To test the results concerning low-rank approximability, we 
  solve an equation in the form $\mathcal A^{\alpha} x = c$, and 
  then check the distance of the solution with the closest 
  rank $j$ tensor, and we compare it with the upper bound 
  from Theorem~\ref{thm:lrapprox}. We choose 
  as $A_i$ the discretization of the 1D Laplacian as in Section~\ref{sec:fractionalpoisson}, 
  and the right hand side $c$ as $c = c_1 \otimes \ldots \otimes c_d$, with 
  $c_i$ containing entries distributed as independent Gaussian random variables 
  with mean $0$ and variance $1$. 

  We have computed 
  the reference solution explicitly by diagonalization of the $A_i$. 
  Then, we have approximated 
  for each $N = 1, \ldots, 40$ the best CP approximant of rank 
  at most $N$ using the \texttt{cp\_als} algorithm in the 
  Tensor Toolbox \cite{kolda2009tensor} and 
  \texttt{cpd} from TensorLab \cite{tensorlab}, and for each 
  $N$ we have chosen the best approximation. The decay rate is compared 
  with $\mathcal O(e^{-\sqrt{2\pi dN}})$ predicted by \Cref{thm:lrapprox}
  in Figure~\ref{fig:lrapprox}. 
  The problem is chosen of size $n_1 = n_2 = n_3 = 128$, 
  the power $\alpha = \frac{1}{2}$, and 
  the tolerance for the \texttt{cp\_als} algorithm is set to $10^{-12}$, 
  and a maximum of $100$ iterations. The parameters for \texttt{cpd} have not 
  been tuned, as they were already providing good results out of the box. 

  The estimate turns out to be 
  somewhat pessimistic (the convergence of low-rank approximant in 
  CPD format is faster than what we predict), but is closer 
  than what we will obtain in the HOSVD and TT case.

  \begin{figure}
    \centering
    \begin{tikzpicture}
      \begin{semilogyaxis}[
        legend pos = south west, 
        xmax = 25, 
        title = {Distance from the best CP rank $N$ approximant},
        width=.7\linewidth, height=.35\textheight, xlabel = $N$]
        \addplot table {lowrank.dat};
        \addplot table[x index = 0, y index = 2] {lowrank.dat};
        \legend{
          $\norm{X - X_N^{(CP)}}_F$, 
          $\mathcal O(-\sqrt{2d\pi N})$
        }
      \end{semilogyaxis}
    \end{tikzpicture}

    \caption{Distance of the solution $X$ from the best approximant  
      of CP rank at most $N$, approximated by the best approximation 
      obtained from \texttt{cp\_als} in 
      the Tensor Toolbox and \texttt{cpd} from TensorLab. The distance is compared with the upper 
      bound for the asymptotic decay rate predicted by \Cref{thm:lrapprox}.}
    \label{fig:lrapprox}
  \end{figure}
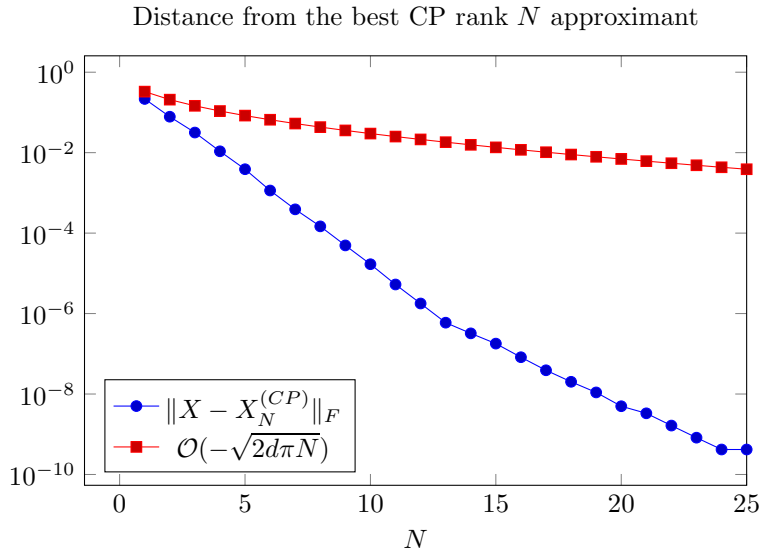

  We have then run the same tests for multilinear and Tensor-Train 
  ranks, which are much smaller. In this context, our prediction of 
  approximability turns out to be very pessimistic, as visible 
  in \Cref{fig:lrapprox2}.

  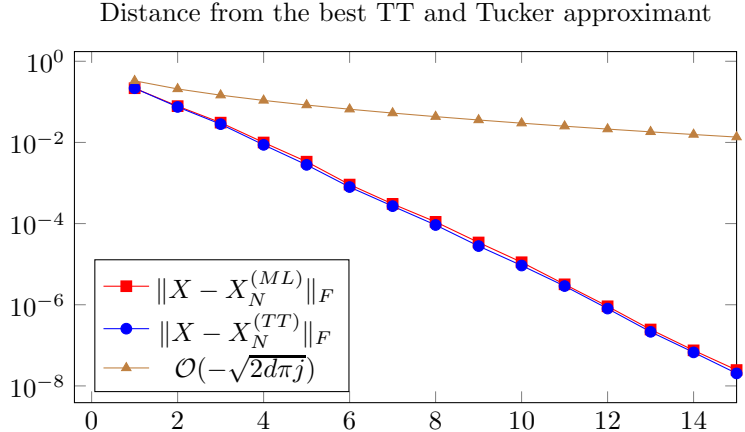
\begin{figure}
    \centering
    \begin{tikzpicture}
      \begin{semilogyaxis}[
        title = {Distance from the best TT and Tucker approximant},
        legend pos = south west, 
        width=.7\linewidth, height=.3\textheight, xmax = 15]
        \addplot[color=red, mark=square*] table[x index = 0, y index = 3] {lowrank.dat};
        \addplot [color=blue, mark=*] table[x index = 0, y index = 4] {lowrank.dat};
        \addplot [color=brown, mark=triangle*] table[x index = 0, y index = 2] {lowrank.dat};
        \legend{
          $\norm{X - X_N^{(ML)}}_F$, 
          $\norm{X - X_N^{(TT)}}_F$, 
          $\mathcal O(-\sqrt{2d\pi j})$
        }
      \end{semilogyaxis}
    \end{tikzpicture}

    \caption{Distance of the solution $X$ from the best approximant  
      of multilinear and TT ranks at most $N$, denoted by 
      $X_N^{(ML)}$ and $X_N^{(TT)}$ and approximated by \texttt{hosvd} in 
      the Tensor Toolbox and by the TT-Toolbox, respectively. 
      The distance is compared with the upper 
      bound for the asymptotic decay rate predicted by \Cref{thm:lrapprox}.}
    \label{fig:lrapprox2}
  \end{figure}

  We believe that the definition of ranks for the multilinear and 
  TT case, involving matricizations, may be analyzed with more powerful 
  tools from matrix theory, and hence obtain stronger decay bounds. 

  If on one hand the bounds are not completely descriptive of the 
  decay rate, they can be used to justify the application of low-rank 
  methods to the problems under consideration, since they provide 
  easily computable a-priori bounds. 

  \subsection{High-dimensional fractional PDEs with tensor-trains}

  We consider the computation of the solution for the solution of the 
  PDE $(-\Delta)^{\alpha} = f$ over $[0, 1]^d$, with large $d$, and we choose 
  the function $f(x_1, \ldots, x_d)$ as follows:
  \[
    f(x_1, \ldots, x_d) := \frac{1}{1 + x_1 + \ldots + x_d}, \qquad 
    x_i \in [0, 1]. 
  \]
  This function has low multilinear and tensor train ranks \cite{shi2021compressibility}, 
  but methods based on the Tucker decomposition are not suitable, because of the 
  exponential storage cost in $d$. On the other hand, the CPD of a function 
  not directly given in a separable form is not easy to compute in general. Hence, 
  we focus on solving the equation in a Tensor-Train format. 
  
  As we did in Section \ref{sec:fractionalpoisson}, we discretize the domain
  with a uniformly spaced grid with 128 points in each direction and we compute $
    \mathbf u = \mathcal A^{-\alpha} \mathbf f,$
  where $\mathcal A$ is the discretization of $-\Delta$ and $\mathbf f$ is the 
  vector containing all the evaluations 
  of $f$ at the internal points of the discretization grid.
  
  To obtain a Tensor-Train representation of $\mathbf f$, the tensor with the 
  evaluations of $f(x_1, \ldots, x_d)$ at the grid points, we relied on 
  an AMEn-based version of the TT-cross approximation, as described in 
  \cite{dolgov2014alternating}, and implemented in the TT-Toolbox. 
  This only requires to evaluate $f(x_1, \ldots, x_d)$ 
  at a few specific points in the grid, making the method 
  very effective in practice. 

  We then use our exponential sum approximation with $N$ term, 
  which requires to compute the $Nd$ matrix exponentials $e^{-\beta_j A_i}$ for 
  $j = 1, \ldots, N$ and $i = 1, \ldots, d$, 
  and then to multiply them by a low TT rank matrix with a mode-$j$ product. The latter 
  can be evaluated efficiently in the Tensor-Train arithmetic, and the storage for the result of 
  the partial sum is kept under control by recompressing the tensors with 
  economy TT-SVDs, as implemented in the \texttt{round} command of the TT-Toolbox 
  \cite{oseledets2011tensor}. 
 
 In Table \ref{tab:exp-tt} we report time and accuracy for the approximation of $\mathbf u$ by the exponential sum with $N=200$ terms. Moreover we report the TT-rank of the approximated solution.
 
   \begin{table}
    \centering
    \pgfplotstabletypeset[clear infinite,
      every head row/.style={after row=\hline},
      empty cells with={\ensuremath{-}},
      columns/0/.style={column name={$d$}, column type/.add={}{|}},
      columns/1/.style={column name={Time(s)}},
      columns/2/.style={column name={Error}},
      columns/3/.style={column name={Rank}},
    ]{exp_tt.dat}
    \caption{Time, accuracy and rank of the final solution, of the low-rank 
      approximation to $\mathcal A^{-\alpha} \mathbf f$ on $[0,1]^d$ obtained 		by the exponential sums of length $N = 200$, and 
      different choices of $d$. The accuracy is computed by comparing the approximated solution with the one obtained by solving the Sylvester equation
      by diagonalization, which is only feasible for small $d \leq 4$. }
    \label{tab:exp-tt}
  \end{table}

  \section{Conclusions and outlook} \label{sec:conclusions}

  We have developed an exponential sum approximation for 
  $z^{-\alpha}$, which finds application in solving linear systems 
  involving fractional powers of Kronecker sums. 

  This allows to effectively solve such linear systems when the 
  right hand side is stored in any low-rank tensor formats, and 
  examples have been reported for CP, Tensor-Trains, and Tucker 
  tensors. 

  The construction also allows to predict the approximability of 
  the solution in the same format of the right hand sides. We have verified 
  that our prediction is not completely descriptive of the approximation 
  speed in the CP format and 
  for TT and multilinear ranks. We believe 
  that other tools may be used to derive better bounds in the 
  TT and Tucker case, which will be investigated in future work. In any case, 
  it can be used to provide a-prori justification for the approximability, 
  and suggests that using adaptive rank truncations may lead to very 
  good results. 

  Since the latter formats (TT and Tucker) allow for easy recompressions, the 
  proposed exponential sum approach can still be a competitive 
  solver, even if the ranks in the solution are 
  slightly overestimated. This has been demonstrated on a few practical cases. 
  In particular, in the TT case this framework allows to treat 
  very high-dimensional problems. 

  The derived bounds do not depend on the spectrum of the system 
  matrix, and can be directly applied to unbounded operators. This generality is likely 
  part of the reason why the predicted approximability in low-rank formats are worse 
  than what we obtain in practice. 

    \bibliographystyle{plain}
    \bibliography{biblio}

\end{document}